\numberwithin{equation}{section}
\newcommand{\al}{\alpha}
\newcommand{\be}{\beta}
\newcommand{\emp}{\emptyset}
\newcommand{\ga}{\gamma}
\newcommand{\Ga}{\Gamma}
\newcommand{\la}{\lambda}
\newcommand{\La}{\Lambda}
\newcommand{\ot}{\otimes}
\newcommand{\Om}{\Omega}
\newcommand{\te}{\theta}
\newcommand{\ve}{\varepsilon}
\newcommand{\vt}{\vartheta}
\newcommand{\mc}{\mathscr{C}}
\newcommand{\pp}{\mathscr{P}}
\newcommand{\bN}{\mathbb{N}}
\newcommand{\bQ}{\mathbb{Q}}
\newcommand{\bZ}{\mathbb{Z}}
\newcommand{\mb}[1]{\mbox{#1}}
\newcommand{\mi}{\mbox{id}}
\newcommand{\bs}[1]{{\scriptsize\mbox{#1}}}
\newcommand{\lb}{\left(}
\newcommand{\rb}{\right)}
\newcommand{\rw}{\rightarrow}
\newcommand{\beq}{\begin{equation}}
\newcommand{\eeq}{\end{equation}}
\DeclareMathOperator{\ke}{Ker}
\DeclareMathOperator{\od}{o}
\DeclareMathOperator{\lex}{lex}
\DeclareMathOperator{\wll}{wll}
\DeclareMathOperator{\ly}{LYN}
\DeclareMathOperator{\el}{eLYN}
\begin{document}

\newtheorem{theorem}{Theorem}[section]

\newtheorem{lem}[theorem]{Lemma}

\newtheorem{cor}[theorem]{Corollary}
\newtheorem{prop}[theorem]{Proposition}

\theoremstyle{remark}
\newtheorem{rem}[theorem]{Remark}

\newtheorem{defn}[theorem]{Definition}

\newtheorem{exam}[theorem]{Example}

\theoremstyle{conjecture}
\newtheorem{con}[theorem]{Conjecture}

\renewcommand\arraystretch{1.2}

\title[Towards a polynomial basis of PQSym]{Towards a polynomial basis of the algebra of peak quasisymmetric functions}

\author[Li]{Yunnan Li}
\address{School of Mathematics and Information Science, Guangzhou University, Waihuan Road West 230, Guangzhou 510006, China}
\email{ynli@gzhu.edu.cn}

\date\today
\subjclass[2010]{Primary 05E05; Secondary 05A05, 13B25}

\begin{abstract}
Hazewinkel proved the Ditters conjecture that the algebra of quasisymmetric functions over the integers is free commutative by constructing a nice polynomial basis. In this paper we prove a structure theorem for the algebra of peak quasisymmetric functions (PQSym) over the integers. It provides a polynomial basis of PQSym over the rational field, different from Hsiao's basis, and implies the freeness of PQSym over its subring of symmetric functions spanned by Schur's Q-functions.
\end{abstract}

\keywords{peak quasisymmetric function, polynomial basis, lambda ring}

\maketitle


\section{Introduction}
As an important nonsymmetric generalization of symmetric functions, quasisymmetric functions were introduced by Gessel in \cite{Ge} to deal with the combinatorics of $P$-partitions. The algebra of quasisymmetric functions (QSym) has a natural Hopf algebra structure. In fact, quasisymmetric functions have been around since at least 1972, and at that time QSym appeared as the dual algebra of the Leibniz-Hopf algebra over the integers; see \cite{Di}. Perhaps even more importantly, the Leibniz-Hopf algebra is precisely the algebra of noncommutative symmetric functions (NSym), systematically studied in \cite{GKL} with subsequent papers.
And the graded Hopf duality between QSym and NSym was proved in \cite{MR}.

One long-standing conjecture due to Ditters in \cite{Di} states that the algebra of quasisymmetric functions over the integers is free polynomial. This is of great interest, for instance because of the role it plays in a classification theory for noncommutative formal groups. There were many meaningful attempts to solve the Ditters conjecture, and the first rigorous proof was given by Hazewinkel in \cite{Haz}, where he first dealt with a $p$-adic version of the Ditters conjecture then completed the case over the integers.
Later, another more direct proof appeared in \cite{Haz1} using the technique of lambda rings; see also \cite[16.71]{Haz2}, \cite[\S 6]{HNK}.
In fact, Hazewinkel gave a nice structure theorem for QSym, which constructs a polymonial basis of QSym and implies that QSym is free over its subring $\La$ of symmetric functions. We mention that Hazewinkel's structure theorem was also generalized in \cite{GX} to the one for a large class of mixable shuffle algebras arising from the construction of free Rota-Baxter algebras. In topology QSym arises as the cohomology of the loop space of the suspension of the infinite complex projective space, $\Om\Sigma CP^\infty$. It gives an alternative approach to the Ditters conjecture from algebraic topology based on James' splitting of $\Om\Sigma CP^\infty$; see \cite{BR}.

On the other hand, peak quasisymmetric functions were first considered by Stembridge in \cite{Ste} to develop Stanley's theory of $P$-partitions to the enriched case. As a Hopf subalgebra of QSym, the algebra of peak quasisymmetric functions, denoted PQSym, was widely studied and deeply related to many topics in combinatorics, geometry and representation theory; see \cite{BHT,BMSW,BHW}. In particular, PQSym has several nice bases refining classical Schur's Q-functions, one of which consists of Stembridge's peak functions defined as the weight enumerators of all enriched $P$-partitions of chains. Meanwhile, the graded Hopf dual of PQSym is precisely the peak algebra of symmetric groups (Peak) and such duality was studied in detail in \cite{Sch}.

In \cite{Hsi} Hsiao defined another nice basis of PQSym, called monomial peak functions and obtained by monomial quasisymmetric functions via Stembridge's descent-to-peak map. From this monomial-like basis, He proved that the peak algebra $\mb{Peak}_\bQ$ is isomorphic to the concatenation Hopf algebra over $\bQ$, whose coproduct is adjoint to the shuffle product. Hence, its Hopf dual $\mb{PQSym}_\bQ$ is a shuffle algebra over $\bQ$. A well-known theorem of Radford says that a shuffle algebra is freely generated by its subset of Lyndon words. In particular, Hsiao found the corresponding polynomial basis, containing all the Newton power sums $p_n$.

Inspired by the work of Hazewinkel and Hsiao, we give a structure theorem for PQSym in this paper (Theorem \ref{fr}), as a peak version of Hazewinkel's one for QSym in \cite{Haz1}. First we figure out a lambda ring structure on PQSym, then construct a set of polynomial generators of PQSym and find the complete relations they are subject to. In particular, we obtain another free polynomial basis of PQSym over $\bQ$, different from Hsiao's one.

In \cite{SY}, Savage and Yacobi proved the freeness of QSym over its subring $\La$ of symmetric functions, alternatively using the technique of representation theory, namely, Heisenberg doubles arising from the tower of 0-Hecke algebras. Later, in \cite{Li} we applied such method to the case of PQSym by the tower of 0-Hecke-Clifford algebras, in order to prove the freeness of PQSym over its subring $\Ga$ spanned by Schur's Q-functions. Consequently, it is natural to ask for a more straightforward approach. That intrigues us to give the structure theorem \ref{fr} here immediately implying such freeness. Moreover, by our structure theorem it only needs to handle the case of $\Ga$ to show that PQSym is free polynomial, but even for the subring $\Ga$ any result about this is hardly known so far. Besides, a topological interpretation for PQSym (in particular $\Ga$) along the line of that in \cite{BR} for QSym may be also interesting to consider.

The organization of the paper is as follows. In $\S 2$ we introduce some notations in combinatorics, definitions for QSym and PQSym,
the terminology of lambda rings, and the lambda ring structure of QSym. In $\S 3$ we first recall Hsiao's result about monomial peak functions, then prove that PQSym is a lambda quotient ring of QSym under Stembridge's decsent-to-peak map. After that, we eventually give our structure theorem for PQSym closely related to Hazewinkel's one for QSym.

\section{Background}

\subsection{Notations and definitions}
Denote by $\bN$ (resp. $\bN_{\od}$) the set of positive (resp. odd positive) integers. Given any $m,n\in\bN,\,m\leq n$, let $[m,n]:=\{m,m+1,\dots,n\}$ and $[n]:=[1,n]$ for short. Let $\mc(n)$ be the set of compositions of $n$, consisting of ordered tuples of positive integers summed up to $n$ and $\mc:=\bigcup\limits_{n\geq1}^.\mc(n)$. Write $\al\vDash n$ when $\al\in \mc(n)$. Given $\al=(\al_1,\dots,\al_r)\vDash n$, let its length $\ell(\al):=r$, its weight $\mb{wt}(\al):=n$ and define its associated \textit{descent set} as
\[D(\al):=\{\al_1,\al_1+\al_2,\dots,\al_1+\cdots+\al_{r-1}\}
\subseteq[n-1].\]
Also, the refining order $\leq$ on $\mc(n)$ is defined by
\[\al\leq\be\mb{ if and only if }D(\be)\subseteq D(\al),\,\forall\al,\be\vDash n.\]
Let $k\cdot\al:=(k\al_1,\dots,k\al_r)$ for any $k\in\bN$. Let $\al*\be$ be the concatenation of $\al$ and $\be$.

We highlight the subset $\mc_{\od}(n)$ of $\mc(n)$, consisting of compositions of $n$ with odd parts. Let $\mc_{\od}:=\bigcup\limits_{n\geq1}^.\mc_{\od}(n)$.
It is well-known that
\[|\mc_{\od}(n)|=f_{n-1},\]
where $\{f_n\}_{n\geq0}$ is the Fibonacci sequence defined recursively by
\[f_0=f_1=1,\,f_n=f_{n-1}+f_{n-2},\,n\geq2.\]
It corresponds to two possible cases for arbitrary $\al\in\mc_{\od}(n+1)$. Namely, the first part of $\al$ is 1 or not smaller than 3.

Throughout this paper, we only consider two base rings $\bZ$ and $\bQ$.
For any ring $A$,  we always denote $A_\bQ:=A\ot_\bZ\bQ$, when the base ring changes from $\bZ$ to $\bQ$.

\subsection{Peak quasisymmetric functions}

Let $\La$ be the graded ring of symmetric functions in the commuting variables $x_1,x_2,\dots$ with integer coefficients, then it has two usual polynomial basis, the \textit{elementary symmetric functions} $\{e_n:n\in\bN\}$ and the \textit{complete symmetric functions} $\{h_n:n\in\bN\}$. Namely,
\[\La=\bZ[e_n:n\in\bN]=\bZ[h_n:n\in\bN].\]
Let $\Ga$ be the subring of $\La$ with the generators $q_n\,(n\geq1)$ defined by
\beq\label{gcf}\sum_{n\geq0}q_nz^n=\prod_{i\geq1}\dfrac{1+x_iz}{1-x_iz}.\eeq
That is, $q_n=\sum_{i=0}^n h_ie_{n-i},\,n\in\bN$, and they satisfy the \textit{Euler relations}
\beq\label{eu} \sum_{i=0}^n(-1)^iq_iq_{n-i}=0,\,n\in\bN,\eeq
which are complete to define $\Ga$. Recall that a \textit{(strict) partition} is a composition with (strictly) decreasing parts, then  \[\{q_\la:\la\mb{ strict partition}\}\]
is a $\bZ$-basis of $\Ga$, where $q_\la:=q_{\la_1}q_{\la_2}\cdots$
and $q_\emptyset=1$ for $\la=\emptyset$ by convention. Besides, $\Ga$ has another $\bZ$-basis
\[\{Q_\la:\la\mb{ strict partition}\},\]
called \textit{Schur's Q-functions}. We also note that
\[\La_\bQ=\bQ[p_n:n\in\bN]\mb{ and }\Ga_\bQ=\bQ[p_n:n\in\bN_{\od}]
=\bQ[q_n:n\in\bN_{\od}],\]
where $p_n:=\sum_{i\geq1}x_i^n\,(n\in\bN)$, the \textit{Newton power sums}.
Moreover, there exists a surjective ring homomorphism
\[\te:\La\rw\Ga,\quad h_n\mapsto q_n,\,n\in\bN.\]
Then $\te(e_n)=q_n,\,\te(p_n)=(1-(-1)^n)p_n,\,n\in\bN$.
For the basics of the rings $\La$ and $\Ga$, one can refer to \cite[Ch. I, Ch. III, \S 8]{Mac}.

As a non-symmetric generalization of $\La$, the algebra of \textit{quasisymmetric functions} over the integers, denoted by QSym, is a subring of the power series ring $\bZ[[x_1,x_2,\dots]]$ in the commuting variables $x_1,x_2,\dots$ and has a $\bZ$-basis, the \textit{monomial quasisymmetric functions}, defined by
\[M_\al:=M_\al(x)=\sum\limits_{i_1<\cdots< i_r}x_{i_1}^{\al_1}\cdots x_{i_r}^{\al_r},\]
where $\al=(\al_1,\dots,\al_r)$ varies over the composition set $\mc$. The multiplication of $M_\al$ comes from the \textit{quasi-shuffle product} $\bowtie$ on $\bZ\mc$. Recall that such operation is recursively defined as follows:
\[\al\bowtie\emp=\emp\bowtie\al=\al,\,
\al\bowtie\be=(a_1)*(\al'\bowtie\be)+(b_1)*(\al\bowtie\be')+
(a_1+b_1)*(\al'\bowtie\be'),\]
where $\al=(a_1,\dots,a_r)=(a_1)*\al',\,\be=(b_1,\dots,b_r)=(b_1)*\be'$. Then by \cite[Lemma 3.3]{Eh}, we have
\beq\label{mo}
M_\al M_\be=\sum_{\ga\in\mc}c_\ga M_\ga,\,\al,\be\in\mc,
\eeq
when $\al\bowtie\be=\sum_\ga c_\ga \ga$ with $c_\ga\in\bZ$. There is another important $\bZ$-basis, the \textit{fundamental quasisymmetric functions}, defined by
\[F_\al:=F_\al(x)=\sum\limits_{i_1\leq\cdots\leq i_n\atop i_k<i_{k+1}\mb{ \tiny if }k\in D(\al)}x_{i_1}\cdots x_{i_n},\,\al\vDash n.\]
That means $F_\al=\sum_{\be\leq\al}M_\be$.

Now we introduce the algebra of \textit{peak quasisymmetric functions} over the integers defined in \cite{Ste} and denoted by PQSym. It is a subring of QSym. In order to define the usual bases of PQSym, we need the concept of peak subsets. Recall that $P$ is a \textit{peak subset} of $[n-1]$ if $P\subseteq[2,n-1]$ and $i\in P\Rightarrow i-1\notin P$. Denote by $\pp_n$ the collection of all peak subsets of $[n-1]$ and $\pp=\bigcup\limits^._{n\geq1}\pp_n$. Given $\al=(\al_1,\dots,\al_r)\vDash n$, let
\[P(\al):=\{i\in D(\al)\cap[2,n-1]:i-1\notin D(\al)\}\]
be its associated peak subset of $[n-1]$.

We need the following nice bijection between $\mc_{\od}(n)$ and $\pp_n$ for any  $n\in\bN$. Given $\al=(2i_1+1,\dots,2i_r+1)\in\mc_{\od}$, let
\[\hat{\al}:=(\overbrace{2,\dots,2}^{i_1},1,\overbrace{2,\dots,2}^{i_2},1,
\dots,\overbrace{2,\dots,2}^{i_r},1).\]
If $\hat{\al}=(\overbrace{1,\dots,1}^{j_1},2,\overbrace{1,\dots,1}^{j_2},2,
\dots,\overbrace{1,\dots,1}^{j_s},2,1,\dots,1)$ and set
\[S_\al:=\left\{\sum_{k=1}^l(j_k+2):l\in[s]\right\},\]
then the map $\al\mapsto S_\al$ gives a bijection between $\mc_{\od}(n)$ and $\pp_n$. Note that $S_\al=\emp$ when $\al=(1,1,\dots,1)$ and
\begin{align*}
&D(\al)=[n-1]\backslash(S_\al\cup(S_\al-1)),\\
&\ell(\al)=|\{1\mb{'s in }\hat{\al}\}|,\\
&|S_\al|=|\{2\mb{'s in }\hat{\al}\}|=\ell(\hat{\al})-\ell(\al)
=\dfrac{|\al|-\ell(\al)}{2}.
\end{align*}
For any $\al\vDash n$, we denote $\La(\al)$ the unique odd composition of $n$ such that $S_{\La(\al)}=P(\al)$ via such bijection.
For example, if $\al=(1,4,2,3)$, then $\be=\La(\al)=(1^3,5,1^2)$.
$\hat{\be}=(1^3,2^2,1^3),\,D(\be)=\{1,2,3,8,9\}$ and
$S_\be=\{5,7\}$.

One can easily check the following result by definition.
\begin{lem}\label{hat}
For any $\al,\be\in\mc_{\od}(n)$, $P(\hat{\al})=S_\al$, while $S_\al\subseteq S_\be$ if and only if $\hat{\al}\leq\hat{\be}$.
\end{lem}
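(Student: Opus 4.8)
The plan is to translate everything into the language of compositions of $n$ whose parts all lie in $\{1,2\}$ (such as $\hat\al$), and to keep track of the partial sums at which a part equal to $2$ lands. Concretely, for such a composition $\ga=(c_1,\dots,c_m)$ I would set $T(\ga):=\{c_1+\cdots+c_k:c_k=2\}$ and first record three elementary facts. (i) $T(\ga)\subseteq[2,n]$ and $T(\ga)$ contains no two consecutive integers: if $t\in T(\ga)$ then the integer $t-1$ is skipped by the part landing on $t$, hence is not a partial sum, so $t-1\notin T(\ga)$, and $t+1\in T(\ga)$ would force $t$ to be skipped, contradicting that $t$ is a partial sum. (ii) $D(\ga)=[1,n-1]\backslash(T(\ga)-1)$: an integer $j\in[1,n-1]$ fails to be a partial sum of $\ga$ exactly when it is the (unique) integer skipped by some part of size $2$, i.e.\ exactly when $j+1\in T(\ga)$; since the partial sum $n$ is irrelevant here, this gives the claimed description of $D(\ga)$. (iii) if moreover $c_m=1$, then $T(\ga)\subseteq[2,n-1]$ and $P(\ga)=T(\ga)$: for $i\in[2,n-1]$ one has $i\in D(\ga)\iff i+1\notin T(\ga)$ and $i-1\notin D(\ga)\iff i\in T(\ga)$, and by (i) the condition $i\in T(\ga)$ already implies $i+1\notin T(\ga)$.

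Next I would apply this to $\ga=\hat\al$. From the factorisation $\hat{\al}=(1^{j_1},2,1^{j_2},2,\dots,1^{j_s},2,1,\dots,1)$ used to define $S_\al$, the prefix of $\hat\al$ ending just after the $l$-th part equal to $2$ consists of $j_1+\cdots+j_l$ ones and $l$ twos, so that $l$-th two lands on the partial sum $\sum_{k=1}^l(j_k+2)$; hence $T(\hat\al)=\{\sum_{k=1}^l(j_k+2):l\in[s]\}=S_\al$ straight from the definition (the degenerate case $\al=(1^n)$, where $s=0$, giving $T(\hat\al)=S_\al=\emptyset$). Since $\hat\al$ always terminates in a $1$, fact (iii) applies and yields $P(\hat\al)=T(\hat\al)=S_\al$, which is the first assertion; and fact (ii) gives $D(\hat\al)=[1,n-1]\backslash(S_\al-1)$, and likewise $D(\hat\be)=[1,n-1]\backslash(S_\be-1)$.

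Finally, since $S_\al$ and $S_\be$ are peak subsets of $[n-1]$ they lie in $[2,n-1]$, so $S_\al-1,S_\be-1\subseteq[1,n-1]$; taking complements inside $[1,n-1]$ and using the definition of the refining order, I would conclude
\[\hat\al\leq\hat\be\iff D(\hat\be)\subseteq D(\hat\al)\iff S_\al-1\subseteq S_\be-1\iff S_\al\subseteq S_\be,\]
which finishes the proof. The only points requiring care are the bookkeeping in the identity $T(\hat\al)=S_\al$ (counting correctly the ones versus twos consumed before the $l$-th two) and the boundary behaviour — namely that $\hat\al$ ends in a $1$, so that (iii) is applicable and $S_\al\subseteq[2,n-1]$, which is what makes the complementation step legitimate — but none of this is genuinely hard; the real content of the lemma is simply the translation through $T(\cdot)$.
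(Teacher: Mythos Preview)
Your proof is correct. The paper itself does not supply a proof of this lemma at all --- it simply states that ``one can easily check the following result by definition'' --- so your argument is a careful and complete elaboration of exactly the routine verification the author had in mind, carried out via the convenient auxiliary notion $T(\ga)$ and the identity $D(\hat\al)=[n-1]\setminus(S_\al-1)$.
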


Recall that Stembridge's \textit{peak functions} in PQSym can be defined by
\beq\label{ka}K_P=2^{|P|+1}\sum_{\al\vDash n\atop P\subseteq D(\al)\triangle(D(\al)+1)}F_\al,\,P\in\pp_n,\eeq
where $D\triangle(D+1)=D\backslash(D+1)\cup (D+1)\backslash D$ for any $D\subseteq[n-1]$ and $D+1:=\{x+1:x\in D\}$. Then $\{K_P\}_{P\in\pp_n}$ forms a $\bZ$-basis of PQSym and there also exists a surjective ring homomorphism
\[\vt:\mb{QSym}\rw\mb{PQSym},\quad F_\al\mapsto K_{P(\al)}.\]
Note that
\[K_P=\sum_{\al\vDash n\atop P\subseteq D(\al)\cup(D(\al)+1)}2^{\ell(\al)}M_\al,\,P\in\pp_n\]
and in particular,
\beq\label{ke}
K_{\emp_n}=q_n=2\sum_{\al\vDash n}F_\al=\sum_{\al\vDash n}2^{\ell(\al)}M_\al.\eeq
Here we write $K_\al:=K_{S_\al}$ for any $\al\in\mc_{\od}$.

On the other hand, $\La$ (resp. $\Ga$) is a subring of QSym (resp. PQSym) and the following commutative diagram holds:
\beq\label{sq}\xymatrix@=2em{\mb{QSym}\ar@{->}[r]^-{\vt}
&\mb{PQSym}\\
\La\ar@{->}[r]^-{\te}\ar@{->}[u]&\Ga\ar@{->}[u]},\eeq
where the vertical maps in the diagram are inclusions.

All the rings introduced above have nice Hopf algebra structures in combinatorics. We do not mention this aspect here and one can refer it in \cite{Eh,GKL,MR,Sch}, etc.

\subsection{QSym as lambda rings}
The concept of lambda rings first appeared in Grothendieck's work concerning the Riemann-Roch theorem. So far they have been widely studied in many areas, e.g. K-theory, representation theory of finite groups, the theory of free Lie algebras, etc.
Recalled that a \textit{lambda ring ($\la$-ring)} is a commutative ring $R$ equipped with extra operations
\[\la^i: R\rw R\]
that behave just like exterior powers (of vector spaces or representations). A \textit{morphism} of $\la$-rings $F:A\rw B$ is a morphism of rings that commutes with the exterior product operations, i.e. $F(\la^i_A(x))=\la^i_B(F(x)),\,x\in A$.

There are associated ring endomorphisms called \textit{Adams operations} (in algebraic topology) or \textit{power operations} on a lambda ring. Given a $\la$-ring $R$ with operations $\la^i:R\rw R$, define operations $\Psi^i:R\rw R$ by the formula
\beq\label{lp}\dfrac{d}{dt}\log\la_t(a)=\sum_{n=0}^{\infty}
(-1)^n\Psi^{n+1}(a)t^n,\eeq
where $\la_t(a)=1+\la^1(a)t+\la^2(a)t^2+\cdots$ for any $a\in R$. Set $\la^0(a)=1$ by convenience. Then a set of operations $\la^i:R\rw R$ for a torsion free ring $R$ turns it into a $\la$-ring if and only if the Adams operations $\Psi^i:R\rw R$ are all ring endomorphisms
and in addition satisfy
\beq\label{psi}\Psi^1=\mi,\,\Psi^m\circ\Psi^n=\Psi^{mn},\,m,n\in\bN.\eeq
Note also that the relation \eqref{lp} between the lambda operations and the Adams operations is precisely the same as that between the elementary symmetric functions and the Newton power sums. Thus there are the following useful determinantal formulas.
\begin{align}
&\label{lps}n!\la^n(a)=\begin{vmatrix}
\Psi^1(a)&1&0&\dots&0\\
\Psi^2(a)&\Psi^1(a)&2&\ddots&\vdots\\
\vdots&\vdots&\ddots&\ddots&0\\
\Psi^{n-1}(a)&\Psi^{n-2}(a)&\dots&\Psi^1(a)&n-1\\
\Psi^n(a)&\Psi^{n-1}(a)&\dots&\Psi^2(a)&\Psi^1(a)
\end{vmatrix},\\
&\label{pl}\Psi^n(a)=\begin{vmatrix}
\la^1(a)&1&0&\dots&0\\
2\la^2(a)&\la^1(a)&1&\ddots&\vdots\\
\vdots&\vdots&\ddots&\ddots&0\\
(n-1)\la^{n-1}(a)&\la^{n-2}(a)&\dots&\la^1(a)&1\\
n\la^n(a)&\la^{n-1}(a)&\dots&\la^2(a)&\la^1(a)
\end{vmatrix}.
\end{align}
For more about $\la$-rings, see \cite{Haz2, Kn}.

Recall that there is a simple lambda ring structure on $\bZ[[x_1, x_2,\dots]]$ given by
\[\la^i(x_j)=
\begin{cases}
x_j&\mb{if }i=1\\
0&\mb{if }i\geq2
\end{cases},\,j=1,2,\dots\]
of which the associated Adams endomorphisms are, obviously, the power operations
\[\Psi^n:x_j\mapsto x_j^n.\]

\begin{theorem}[{\cite[Th. 3.1]{Haz1}}]\label{qf}
QSym as a subring of $\bZ[[x_1,x_2,\dots]]$ is a lambda ring and the corresponding Adams operations are
\beq\label{pow}\Psi^n: M_α\mapsto M_{n\cdot\al},\,n\in\bN.\eeq
In particular, the ring $\La$ of symmetric functions is a lambda subring of QSym.
\end{theorem}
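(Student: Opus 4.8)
The plan is to verify the hypothesis of the torsion-freeness criterion recalled just above: since $\mb{QSym}\subseteq\bZ[[x_1,x_2,\dots]]$ is torsion free, it suffices to show that QSym is closed under the ambient Adams operations $\Psi^n:x_j\mapsto x_j^n$, that the restriction of each $\Psi^n$ to QSym acts on the monomial basis by $M_\al\mapsto M_{n\cdot\al}$, and that these restricted maps are ring endomorphisms of QSym satisfying $\Psi^1=\mi$ and $\Psi^m\circ\Psi^n=\Psi^{mn}$. The last two points are essentially automatic once the formula $\Psi^n(M_\al)=M_{n\cdot\al}$ is established: $\Psi^n$ is a ring endomorphism of the big power series ring by construction, so its restriction is a ring endomorphism of the subring QSym as soon as QSym is $\Psi^n$-stable; $\Psi^1=\mi$ because $1\cdot\al=\al$; and $\Psi^m\Psi^n(M_\al)=M_{m\cdot(n\cdot\al)}=M_{(mn)\cdot\al}=\Psi^{mn}(M_\al)$ since $m\cdot(n\cdot\al)=(mn)\cdot\al$ componentwise.

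So the real content is the single computation $\Psi^n(M_\al)=M_{n\cdot\al}$, which also subsumes the stability of QSym. First I would record what $\Psi^n$ does to a monomial: from $\Psi^n(x_j)=x_j^n$ and multiplicativity, $\Psi^n\bigl(x_{i_1}^{\al_1}\cdots x_{i_r}^{\al_r}\bigr)=x_{i_1}^{n\al_1}\cdots x_{i_r}^{n\al_r}$. Applying this termwise to
\[M_\al=\sum_{i_1<\cdots<i_r}x_{i_1}^{\al_1}\cdots x_{i_r}^{\al_r}\]
gives, after the change of exponents $\al_k\mapsto n\al_k$,
\[\Psi^n(M_\al)=\sum_{i_1<\cdots<i_r}x_{i_1}^{n\al_1}\cdots x_{i_r}^{n\al_r}=M_{(n\al_1,\dots,n\al_r)}=M_{n\cdot\al},\]
which is again an element of QSym. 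Since the $M_\al$ span QSym over $\bZ$, this simultaneously shows $\Psi^n(\mb{QSym})\subseteq\mb{QSym}$ and pins down the restricted Adams operations. The criterion then yields the lambda ring structure on QSym, and since $\Ga\subseteq\La\subseteq\mb{QSym}$ with $\La$ spanned by those $M_\al$ with $\al$ a partition (equivalently, $\La$ is $\Psi^n$-stable, as $n\cdot\la$ is again a partition when $\la$ is), $\La$ is a lambda subring.

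I do not anticipate a genuine obstacle here; the argument is a direct bookkeeping verification. The only point deserving a word of care is logical rather than computational: one must make sure the lambda operations $\la^i$ on QSym obtained from the criterion coincide with the restriction of the $\la^i$ on $\bZ[[x_1,x_2,\dots]]$, i.e.\ that QSym is a lambda \emph{subring} and not merely a ring carrying some lambda structure. This follows because the passage between $\{\la^i\}$ and $\{\Psi^i\}$ in \eqref{lp} is a universal polynomial identity with rational coefficients, and on a torsion-free ring it is a bijective correspondence; hence agreement of the Adams operations (which we have just checked, being restrictions of the ambient ones) forces agreement of the lambda operations. With that remark in place the proof is complete.
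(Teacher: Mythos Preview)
The paper does not prove this theorem; it is quoted from \cite[Th.~3.1]{Haz1} without argument, so there is no ``paper's own proof'' to compare against. Your approach---compute $\Psi^n(M_\al)=M_{n\cdot\al}$ termwise and invoke the torsion-free criterion---is exactly the line taken in Hazewinkel's original, so in spirit you have reproduced the intended proof.

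Two small points. First, the criterion as stated in the paper runs from given $\la^i$ to the $\Psi^i$, not the other way; to conclude that QSym is closed under the ambient $\la^i$ from closure under the $\Psi^i$ you need one extra observation you did not quite make explicit: since the $M_\al$ have pairwise disjoint monomial supports in $\bZ[[x_1,x_2,\dots]]$, one has $\mb{QSym}_\bQ\cap\bZ[[x_1,x_2,\dots]]=\mb{QSym}$, so from $n!\,\la^n(M_\al)\in\mb{QSym}$ (determinantal formula in the $\Psi^i$) together with $\la^n(M_\al)\in\bZ[[x_1,x_2,\dots]]$ one gets $\la^n(M_\al)\in\mb{QSym}$. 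Your final paragraph gestures at this but stops short of the saturation argument. Second, $\La$ is not spanned by the $M_\al$ with $\al$ a partition (e.g.\ $M_{(2,1)}=\sum_{i<j}x_i^2x_j$ is not symmetric); rather it is spanned by the $m_\la=\sum_{\al}M_\al$ summed over rearrangements $\al$ of $\la$. Your conclusion that $\La$ is $\Psi^n$-stable is still correct, since $\Psi^n(m_\la)=m_{n\cdot\la}$, but the stated reason should be adjusted.
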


We also recall the \textit{plethysm} of symmetric functions. For $f,g\in\La$, write $g$ as a sum of monomials
\[g=\sum_{\al\in\mc}c_\al x^{\al}\]
and let
\[\prod_{i\geq1}(1+y_it)=\prod_{\al\in\mc}(1+x^\al t)^{c_\al}.\]
Define the plethysm $f\circ g\in\La$ by
\[f\circ g=f(y_1,y_2,\dots).\]
Clearly, the plethysm operation provides another kind of ring multiplication of $\La$.

Note that any $\la$-ring $R$ naturally has the following $\La$-module structure, when $\La$ is equipped with the plethysm product:
\beq\label{ac}\La\times R\rw R,(f,a)\mapsto f(a),\eeq
where
\beq\label{fa}f(a):=f(\Psi^1(a),\Psi^2(a),\dots)\eeq
is obtained by substituting $\Psi^n(a)$ for each $p_n$ in the polynomial form of $f$ in terms of the power sums (with rational coefficients). In order to see that \eqref{ac} is well-defined, one only needs to see that
\[f(a)=f(\la^1(a),\la^2(a),\dots),\]
which is derived from the polynomial form of $f$ in terms of the elementary symmetric functions instead (with integral coefficients) by substituting $\la^n(a)$ for each $e_n$. Hence, $f(a)$ certainly lies in $R$. Meanwhile, one can check that
\beq\label{co} f(g(a))=(f\circ g)(a),\eeq
thus \eqref{ac} really defines a $\La$-module action on $R$. Indeed, formula \eqref{co} comes from the following identity
\[\Psi^n(g)(a)=\Psi^n(g(a))=g(\Psi^n(a)),\]
which is obtained by the property of the Adam operations $\Psi^n$.

\section{The structure theorem for PQSym}
In this section, we give a structure theorem for PQSym using the technique of lambda rings, which is totally different from
that in Hsiao's paper \cite{Hsi}. First introduce \[L_\al:=\vt(M_\al),\,\al\in\mc_{\od}\]
and call them the \textit{monomial peak functions} as in \cite[\S 2]{Hsi}, while such $L_\al$ differs by a sign from the $\eta_\al$ defined there, that is, $\eta_\al=(-1)^{|S_\al|}L_\al$. In particular, $L_{(1^n)}=q_n,\,n\in\bN$.

Let $\mc_{\bs{e}}$ be the set of compositions with their last parts to be even. If $\al\notin\mc_{\bs{e}}$, then there is a unique factorization $\al=\al_{(1)}*\al_{(2)}*\cdots*\al_{(l)}$ such that the last part of each $\al_{(j)}$ is odd and all other parts are even. In this case, we abuse the notation to set
\[\vt(\al):=(|\al_{(1)}|,\dots,|\al_{(l)}|).\]
For example, if $\al=(1,2,2,1,2,3)$, then $\vt(\al)=(1,5,5)$. It is clear that $\vt(\al)=\al=\vt(\hat{\al})$ for any $\al\in\mc_{\od}$. Due to \cite[Th. 2.4]{Hsi}, we have the following result about the $L_\al$'s.
\begin{theorem}\label{bas}
The monomial peak functions $L_\al\,(\al\in\mc_{\od})$ form a $\bZ$-basis of PQSym and
\[\ke\,\vt=\mb{span}_\bZ\{M_\al-(-1)^{\ell(\al)+\ell(\be)}M_\be:
\al\in\mc_{\od},\vt(\be)=\al\}
\oplus\mb{span}_\bZ\{M_\al:\al\in\mc_{\bs{e}}\}.\]
\end{theorem}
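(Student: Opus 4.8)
The plan is to reduce the whole statement to one explicit formula describing $\vt$ on the monomial basis of QSym, and then to derive both the basis assertion and the kernel description from it by linear algebra together with a dimension count. Precisely, I claim that for every $\al\in\mc$,
\[
\vt(M_\al)=
\begin{cases}
0 & \text{if }\al\in\mc_{\bs{e}},\\
(-1)^{\ell(\al)+\ell(\vt(\al))}\,L_{\vt(\al)} & \text{if }\al\notin\mc_{\bs{e}};
\end{cases}
\]
recall that $\vt(\al)=\al$ and, by definition, $L_\al=\vt(M_\al)$ for $\al\in\mc_{\od}$, so the second line is the same as $\vt(M_\al)=(-1)^{\ell(\al)+\ell(\vt(\al))}\vt(M_{\vt(\al)})$. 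Call this $(\star)$. Granting it, everything else is bookkeeping. Since $\vt$ is a surjective ring homomorphism and the $M_\al$ span QSym, $(\star)$ shows that $\{L_\mu:\mu\in\mc_{\od}\}$ spans PQSym; in weight $n$ these are $|\mc_{\od}(n)|=f_{n-1}$ elements, equal to $\dim\mb{PQSym}_n=|\pp_n|$, so they form a $\bZ$-basis. Now let $R$ denote the right-hand side of the claimed kernel identity; its two summands are supported, respectively, on the families $\{M_\al:\al\notin\mc_{\bs{e}}\}$ and $\{M_\al:\al\in\mc_{\bs{e}}\}$, which partition the monomial basis, so the displayed sum is indeed direct. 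The inclusion $R\subseteq\ke\,\vt$ is immediate from $(\star)$: $\vt(M_\al)=0$ for $\al\in\mc_{\bs{e}}$, while for $\al\in\mc_{\od}$ and any $\be$ with $\vt(\be)=\al$ one computes $\vt\bigl(M_\al-(-1)^{\ell(\al)+\ell(\be)}M_\be\bigr)=L_\al-(-1)^{\ell(\al)+\ell(\be)}(-1)^{\ell(\be)+\ell(\al)}L_\al=0$. For the reverse inclusion, note that modulo $R$ every $M_\al$ with $\al\in\mc_{\bs{e}}$ is $0$ and every $M_\al$ with $\al\notin\mc_{\bs{e}}$ equals $(-1)^{\ell(\al)+\ell(\vt(\al))}M_{\vt(\al)}$ (apply the generator of $R$ indexed by the pair $\vt(\al)\in\mc_{\od}$ and $\be=\al$); hence any $x\in\ke\,\vt$ is congruent mod $R$ to $\sum_{\mu\in\mc_{\od}}d_\mu M_\mu$ for suitable $d_\mu\in\bZ$, and applying $\vt$ gives $\sum_\mu d_\mu L_\mu=0$, so all $d_\mu=0$ by the linear independence just established, i.e.\ $x\in R$. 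Thus $\ke\,\vt=R$. Finally, since $(1^n)$ is the finest composition of $n$ we have $L_{(1^n)}=\vt(M_{(1^n)})=\vt(F_{(1^n)})=K_{P((1^n))}=K_{\emp_n}=q_n$ by \eqref{ke}, in agreement with the remark preceding the statement.

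It remains to prove $(\star)$. By Möbius inversion on the (Boolean) lattice of descent sets, the relation $F_\al=\sum_{\be\le\al}M_\be$ inverts to $M_\al=\sum_{\be\le\al}(-1)^{\ell(\be)-\ell(\al)}F_\be$, so, writing $n=\mb{wt}(\al)$,
\[
\vt(M_\al)=\sum_{\be\le\al}(-1)^{\ell(\be)-\ell(\al)}K_{P(\be)}
=\sum_{T\subseteq[n-1]\setminus D(\al)}(-1)^{|T|}\,K_{P(D(\al)\cup T)},
\]
and hence the coefficient of a peak set $P$ in $\vt(M_\al)$ is $c_P(\al):=\sum(-1)^{|T|}$, summed over those $T\subseteq[n-1]\setminus D(\al)$ with $P(D(\al)\cup T)=P$. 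The plan is to evaluate each $c_P(\al)$ by a sign-reversing involution that pairs $T$ with $T\triangle\{j(T)\}$ for a single position $j(T)\in[n-1]\setminus D(\al)$, the rule $T\mapsto j(T)$ being read off from the tail of $\al$ (essentially from how the portion of the refinement filling the last part of $\al$ terminates) and chosen so that it is involutive and leaves $P(D(\al)\cup T)$ unchanged. When $\al$ ends in an even part, such a rule can be arranged to be fixed-point free, forcing $c_P(\al)=0$ for all $P$, i.e.\ $\vt(M_\al)=0$; when $\al$ ends in an odd part, the fixed points correspond to the refinements that subdivide no even part past its block, and the residual alternating sum is $(-1)^{\ell(\al)+\ell(\vt(\al))}$ times the analogous sum for $\vt(\al)$, i.e.\ $(-1)^{\ell(\al)+\ell(\vt(\al))}L_{\vt(\al)}$. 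The one genuinely non-formal step — and the main obstacle — is to define the rule $T\mapsto j(T)$ and verify that toggling $j(T)$ never creates or destroys a peak: since toggling $j$ can only affect the peak status at $j$ and at $j+1$, the rule must keep both of these positions non-peaks, or forced one way by $D(\al)$, in every configuration, and making this case analysis airtight is where the proof has content.

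Alternatively, $(\star)$ coincides — up to the sign normalization $\eta_\al=(-1)^{|S_\al|}L_\al$ relating our monomial peak functions to Hsiao's — with \cite[Th.~2.4]{Hsi}; one may therefore simply translate that theorem, together with the computation $L_{(1^n)}=q_n$ above, instead of carrying out the involution directly.
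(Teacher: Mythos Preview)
The paper does not prove this theorem at all: it is stated as a direct consequence of \cite[Th.~2.4]{Hsi}, with no further argument. Your formula $(\star)$ is exactly that theorem (in the $L$-normalization rather than Hsiao's $\eta$-normalization), and your deduction of both assertions from $(\star)$ --- the $\bZ$-basis claim via surjectivity of $\vt$ plus the rank count $|\mc_{\od}(n)|=|\pp_n|$, and the kernel description via the obvious containment together with the reduction of every $M_\al$ modulo $R$ to $\pm M_{\vt(\al)}$ or $0$ --- is clean and correct. In that sense your write-up strictly contains the paper's ``proof'': you make explicit the easy linear-algebra step the paper leaves implicit, and you end at the same citation.

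The only part that is genuinely incomplete is the direct combinatorial proof of $(\star)$ via a sign-reversing involution on the sets $T\subseteq[n-1]\setminus D(\al)$. You correctly identify the delicate point --- one must choose $j(T)$ so that toggling it never changes $P(D(\al)\cup T)$ --- but you do not actually produce such a rule or verify it, and this is precisely the nontrivial content of Hsiao's argument. As written, your involution paragraph is a plausible plan rather than a proof; since you explicitly offer the citation to \cite[Th.~2.4]{Hsi} as an alternative, the overall proposal stands, and is effectively the same approach as the paper's.
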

In particular, $\vt(M_{\hat{\al}})=(-1)^{\ell(\al)+\ell(\hat{\al})}L_{\al},\,\al\in\mc_{\od}$.
Thus applying $\vt$ to the formula $F_{\hat{\al}}=\sum_{\hat{\be}\leq\hat{\al}}M_{\hat{\be}}$,
we have
\[K_\al=\sum_{\be\in\mc_{\od}(n)\atop S_\be\subseteq S_\al}(-1)^{|S_\be|}L_\be,\,\al\in\mc_{\od}(n),\]
by Lemma \ref{hat}. And $L_\al=\sum_{\be\in\mc_{\od}(n)\atop S_\be\subseteq S_\al}(-1)^{|S_\be|}K_\be$ by M\"{o}bius inversion formula.


Note that PQSym is not a lambda subring of QSym with the lambda ring structure given in Theorem \ref{qf}. However, we have the following result instead.
\begin{theorem}\label{la}
There exists a lambda ring structure on PQSym such that the map $\vt$ is a morphism of lambda rings. Its corresponding Adams operations are defined by
\beq\label{po}\Phi^n: L_\al\mapsto\begin{cases}
L_{n\cdot\al},&\mb{if }n\mb{ odd}\\
0,&\mb{if }n\mb{ even}
\end{cases},\,\al\in\mc_{\od},n\in\bN.\eeq
In particular, the ring $\Ga$ is a lambda subring of PQSym.
\end{theorem}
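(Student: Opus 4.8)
The plan is to transport the lambda ring structure from QSym along the surjection $\vt$. Concretely, I would first check that the ideal $\ke\,\vt$ is a \emph{lambda ideal} of QSym, i.e. that it is stable under all the Adams operations $\Psi^n: M_\al\mapsto M_{n\cdot\al}$ from Theorem \ref{qf}; since QSym is torsion free, stability under the $\Psi^n$ is equivalent (via the determinantal formulas \eqref{lps}--\eqref{pl}) to stability under the $\la^i$, so this suffices to induce a lambda ring structure on the quotient $\mb{PQSym}\cong\mb{QSym}/\ke\,\vt$ for which $\vt$ is automatically a morphism of lambda rings. To verify lambda-ideal stability I would use the explicit description of $\ke\,\vt$ from Theorem \ref{bas}: it is spanned by the elements $M_\al-(-1)^{\ell(\al)+\ell(\be)}M_\be$ with $\al\in\mc_{\od}$, $\vt(\be)=\al$, together with the $M_\al$ for $\al\in\mc_{\bs{e}}$. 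Applying $\Psi^n$ multiplies each part of a composition by $n$; the key combinatorial observations are that $\al\in\mc_{\bs{e}}$ implies $n\cdot\al\in\mc_{\bs{e}}$ for \emph{all} $n$ (the last part stays even), that for $n$ even $n\cdot\al\in\mc_{\bs{e}}$ always, and that for $n$ odd the factorization $\be=\be_{(1)}*\cdots*\be_{(l)}$ into blocks (even parts then one odd part) is respected by scaling, so $\vt(n\cdot\be)=n\cdot\vt(\be)$ and $\ell(n\cdot\be)=\ell(\be)$. Feeding these into the two families of generators shows $\Psi^n(\ke\,\vt)\subseteq\ke\,\vt$.

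Next I would identify the induced Adams operations on PQSym by pushing the formula through $\vt$. Write $\Phi^n$ for the operation on PQSym with $\vt\circ\Psi^n=\Phi^n\circ\vt$. Since $L_\al=\vt(M_\al)$ by definition, for $\al\in\mc_{\od}$ we get $\Phi^n(L_\al)=\vt(\Psi^n(M_\al))=\vt(M_{n\cdot\al})$. When $n$ is even, $n\cdot\al\in\mc_{\bs{e}}$, so $M_{n\cdot\al}\in\ke\,\vt$ and $\Phi^n(L_\al)=0$. When $n$ is odd, $n\cdot\al\in\mc_{\od}$, so $\vt(M_{n\cdot\al})=L_{n\cdot\al}$ directly. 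This is exactly \eqref{po}. That $\Phi^1=\mi$ and $\Phi^m\circ\Phi^n=\Phi^{mn}$ can be read off from \eqref{psi} for $\Psi$, or checked directly from \eqref{po} (odd times odd is odd, anything times even is even). Since PQSym is torsion free, having ring-endomorphism Adams operations satisfying \eqref{psi} is equivalent to having a genuine lambda ring structure, so the proof is complete on this front.

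Finally, for the last sentence I would verify that the lambda subring generated inside PQSym by $\Ga$ is $\Ga$ itself, equivalently that $\Ga$ is closed under the $\Phi^n$. Recall from \eqref{ke} that $K_{\emp_n}=q_n=L_{(1^n)}$, so $\Ga$ contains the elements $L_{(1^n)}$, and $\Phi^n(L_{(1^m)})=L_{(1^{nm})}=q_{nm}$ for $n$ odd and $0$ for $n$ even; either way we stay in $\Ga$. More structurally, the commutative square \eqref{sq} together with Theorem \ref{qf} (which says $\La$ is a lambda subring of QSym) shows that $\vt$ restricts to $\te:\La\to\Ga$, and the Adams operations restrict compatibly; since $\La$ is $\Psi$-stable and $\vt(\La)=\Ga$, $\Ga$ is $\Phi$-stable, hence a lambda subring. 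The main obstacle I anticipate is the first step: carefully checking that \emph{both} spanning families of $\ke\,\vt$ are Adams-stable, in particular handling the interaction between the block factorization $\vt(\be)$ and multiplication by $n$, and making sure the sign $(-1)^{\ell(\al)+\ell(\be)}$ is preserved — everything else is then formal lambda ring bookkeeping.
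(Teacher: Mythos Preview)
Your approach is essentially identical to the paper's: show $\Psi^n(\ke\,\vt)\subseteq\ke\,\vt$ using the explicit generators of $\ke\,\vt$ from Theorem~\ref{bas} and the observation $\vt(n\cdot\be)=n\cdot\vt(\be)$ for odd $n$, push the Adams operations through the surjection $\vt$ to obtain $\Phi^n$, compute $\Phi^n(L_\al)=\vt(M_{n\cdot\al})$ to get \eqref{po}, and then invoke the square \eqref{sq} for the claim about $\Ga$. One small slip: in your direct check for $\Ga$ you write $\Phi^n(L_{(1^m)})=L_{(1^{nm})}$, but $n\cdot(1^m)=(n^m)$, so in fact $\Phi^n(L_{(1^m)})=L_{(n^m)}$, which is not obviously in $\Ga$; your subsequent structural argument via $\vt(\La)=\Ga$ and $\Psi^n$-stability of $\La$ is the correct one, and it is exactly what the paper uses.
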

\begin{proof}
First by Theorem \ref{bas}, one can easily check that $\Psi^n(\ke\,\vt)\subseteq\ke\,\vt$ as $\vt(n\cdot\be)=n\cdot\vt(\be),\,n\in\bN_{\od},\be\notin\mc_{\bs{e}}$.
Hence, there exists a unique ring map $\Phi^n$ such that $\vt\circ\Psi^i$ factors through $\vt$ and $\Phi^n$. Namely, the following commutative diagram holds:
\[\xymatrix@=2em{\mb{QSym}\ar@{->}[r]^-{\vt}\ar@{->}[d]_-{\Psi^n}
&\mb{PQSym}\ar@{->}[d]^-{\Phi^n}\\
\mb{QSym}\ar@{->}[r]^-{\vt}&\mb{PQSym}}\]
which guarantees that $\Phi^n$ satisfies relation \eqref{psi} and becomes a set of Adams operations on PQSym, thus PQSym has the desired lambda ring structure such that $\vt$ becomes a morphism of lambda rings. Meanwhile,
\[\Phi^n(L_\al)=\vt\circ\Psi^n(M_\al)=\vt(M_{n\cdot\al})=
\begin{cases}
L_{n\cdot\al},&\mb{if }n\mb{ odd}\\
0,&\mb{if }n\mb{ even}
\end{cases},\,\al\in\mc_{\od}.\]
In particular, the ring $\Ga$ is a lambda subring of PQSym, due to \eqref{sq}. For any $n\in\bN_{\od}$, $L_{(n)}=2M_{(n)}=2p_n$, thus $\Phi^i(p_n)=\dfrac{1}{2}\vt\circ\Psi^i(M_{(n)})
=\dfrac{1-(-1)^i}{2}p_{in},\,i\in\bN$, in $\mb{PQSym}_\bQ$.
\end{proof}

By formula \eqref{lp} and \eqref{po}, we know that the lambda operations, denoted $\tilde{\lambda}^i\,(i\in\bN)$, on PQSym are given as follows.
\[\tilde{\la}_t(f)=\exp\lb\sum_{n\in\bN}
(-1)^{n-1}\dfrac{\Phi^n(f)t^n}{n}\rb=\exp\lb\sum_{n\in\bN_{\od}}
\dfrac{\Phi^n(f)t^n}{n}\rb,\,f\in\mb{PQSym}.\]
It implies that $\tilde{\la}_t(f)\tilde{\la}_{-t}(f)=1$, i.e.
\beq\label{eul}\sum_{i=0}^n(-1)^i\tilde{\lambda}^i(f)\tilde{\lambda}^{n-i}(f)=0,\eeq
of the same form as the Euler relations \eqref{eu}. In particular for $q_1=2p_1$, we have
\[\tilde{\la}_t(q_1)=\exp\lb\sum_{n\in\bN_{\od}}
\dfrac{\Phi^n(q_1)t^n}{n}\rb=\exp\lb\sum_{n\in\bN}\dfrac{1-(-1)^n}{n}p_nt^n\rb
=\prod_{i\geq1}\dfrac{1+x_it}{1-x_it}.\]
That means $\tilde{\la}_t(q_1)=\sum_{n\geq1}q_nt^n$ by \eqref{gcf}, thus
\beq\label{qa}\tilde{\la}^n(q_1)=q_n,\,n\in\bN.\eeq
It can also be obtained by the following commutative diagram as $\vt$ is a morphism of $\la$-rings.
\beq\label{com}\xymatrix@=2em{\mb{QSym}\ar@{->}[r]^-{\vt}\ar@{->}[d]_-{\la^n}
&\mb{PQSym}\ar@{->}[d]^-{\tilde{\la}^n}\\
\mb{QSym}\ar@{->}[r]^-{\vt}&\mb{PQSym}}\eeq

For now on we naturally interpret compositions as words over $\bN$. The \textit{lexicographic order} on these words is defined as follows. Let $\al=(a_1,\dots,a_r)$ and $\be=(b_1,\dots,b_s)$ be two words over $\bN$. Then $\al$ is said to be lexicographically equal
to or larger than $\be$, denoted $\al\geq_{\lex}\be$, if and only if there is a $j\in[\mb{min}\{r,s\}]$ such that
$a_1=b_1,\dots,a_{j-1}=b_{j-1}$ and $a_j>b_j$, or $r\geq s$ and $a_1=b_1,\dots,a_s=b_s$.

\begin{defn}
The \textit{proper tails} (suffixes) of a word
$\al=(a_1,a_2,\dots,a_m)$ are the words $(a_i,a_{i+1},\dots,a_m),i=2,3,\dots,m$. Words of length
1 or 0 have no proper tails. A word is \textit{Lyndon} if and only if it is lexicographically smaller than each of its proper
tails. For instance, $(3),(1,2,1,3)$ are Lyndon, but $(2,1,3)$ is not
Lyndon.
\end{defn}
We call a composition $\al=(a_1,\dots,a_m)$ is \textit{elementary} if
the greatest common divisor of its parts is 1, i.e. $\mb{gcd}\{a_1,\dots,a_m\}=1$. Denote by $\ly$ the set of Lyndon words over $\bN$ and $\el$ its subset of elementary ones. Let $\ly_{\od}:=\ly\cap\mc_{\od},\,
\el_{\od}:=\el\cap\mc_{\od}$. The \textit{Chen-Fox-Lyndon (CFL) factorization theorem} (see \cite[Theorem 6.5.5]{HNK}) is as follows.
\begin{theorem}
For each word $\al$, there is a unique concatenation factorization into nonincreasing Lyndon words
\begin{align*}
&\al=\ga_1^{*r_1}*\ga_2^{*r_2}*\cdots*\ga_k^{*r_k},\,\ga_i\in\ly,\\
&\ga_1>_{\lex}\ga_2>_{\lex}\cdots>_{\lex}\ga_k.
\end{align*}
\end{theorem}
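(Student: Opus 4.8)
The plan is to establish existence and uniqueness separately, both by induction on the length of the word $\al$, using the standard combinatorial properties of Lyndon words with respect to the lexicographic order $\geq_{\lex}$ defined above. For the base case, a word of length $0$ or $1$ is itself Lyndon (vacuously, since it has no proper tails), so the factorization is trivial. For the inductive step on existence, given $\al=(a_1,\dots,a_m)$ with $m\geq 2$, I would let $\ga_1$ be the \emph{longest} prefix of $\al$ that is a Lyndon word (this set is nonempty since $(a_1)$ is Lyndon), write $\al=\ga_1*\al'$, and apply the induction hypothesis to $\al'$ to get $\al'=\ga_2^{*r_2}*\cdots*\ga_k^{*r_k}$ with $\ga_2\geq_{\lex}\cdots\geq_{\lex}\ga_k$ nonincreasing Lyndon words. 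The one point that needs checking is $\ga_1\geq_{\lex}\ga_2$: if instead $\ga_1<_{\lex}\ga_2$, then a standard lemma — if $u,v$ are Lyndon and $u<_{\lex}v$, then $u*v$ is Lyndon — would force $\ga_1*\ga_2$ (or $\ga_1*\ga_2^{*r_2}*\cdots$, iterating) to be a longer Lyndon prefix of $\al$, contradicting maximality of $\ga_1$. Collecting equal consecutive factors then yields the stated form.

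For uniqueness, suppose $\al=\ga_1^{*r_1}*\cdots*\ga_k^{*r_k}=\de_1^{*s_1}*\cdots*\de_l^{*s_l}$ are two such factorizations with $\ga_1>_{\lex}\cdots>_{\lex}\ga_k$ and $\de_1>_{\lex}\cdots>_{\lex}\de_l$. The key claim is that $\ga_1=\de_1$: by the characterization that the lexicographically smallest proper tail (indeed the lexicographically smallest nonempty suffix) of $\al$ begins with the last letter of the smallest Lyndon factor, and dually that $\ga_1$ is the lexicographically largest Lyndon prefix — or, more cleanly, by observing that in a nonincreasing product of Lyndon words the first factor $\ga_1$ is precisely the longest Lyndon prefix of the whole word (since appending any nonincreasing continuation cannot extend a Lyndon prefix, using that a Lyndon word is strictly smaller than all its proper suffixes). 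Hence $\ga_1=\de_1$, and comparing how many times it repeats at the front gives $r_1=s_1$; cancelling $\ga_1^{*r_1}$ and invoking the induction hypothesis on the shorter word finishes the argument.

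I expect the main obstacle — or rather the main technical content — to be the supporting lemma that the concatenation of two Lyndon words $u<_{\lex}v$ is again Lyndon, together with the characterization of $\ga_1$ as the maximal-length Lyndon prefix; these are the load-bearing facts, and both are genuinely about the interaction of the lexicographic order with concatenation. Since the paper cites \cite[Theorem 6.5.5]{HNK} for this statement, in practice I would simply quote that reference for the detailed verification of these order-theoretic lemmas rather than reprove them here, and present the induction skeleton above as the structure of the argument.
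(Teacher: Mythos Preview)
Your proposal is correct and outlines the standard proof of the Chen--Fox--Lyndon factorization theorem. However, the paper does not actually prove this statement: it simply states the result and cites \cite[Theorem 6.5.5]{HNK} for the proof. So there is nothing to compare --- your sketch supplies precisely the argument the paper delegates to the reference, and you already anticipate this by noting you would quote \cite{HNK} for the supporting lemmas.
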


We also need the following useful notion.
\begin{defn}
There is another total order on words over $\bN$, called the \textit{wll-ordering} and denoted $\leq_{\wll}$, where the acronym `wll' stands for `weight first, then length, then lexicographic'. Thus for example
\[(5)>_{\wll}(1,1,2)>_{\wll}(2,2)>_{\wll}(1,3).\]
\end{defn}

By Theorem \ref{qf}, one can consider QSym as a $\La$-module defined as in \eqref{ac}. Hence, for any composition $\al$, we write
\[p_n(\al):=p_n(M_\al)=\Psi^n(M_\al)\]
and
\beq\label{el} e_n(\al):=e_n(M_\al)=e_n(p_1(\al),\dots,p_n(\al)),\,n\in\bN.\eeq
Then by definition, $e_n(\al)=\la^n(M_\al)\in\mb{QSym}$.
Now we recall the well-known result that QSym is free commutative, first proved rigorously by Hazewinkel in \cite{Haz}.
\begin{theorem}[{\cite[Th. 6.7.5]{HNK}}]\label{fre}
$\{e_n(\al):\al\in\el,\,n\in\bN\}$ forms a
free commutative polynomial basis for QSym over the integers.
\end{theorem}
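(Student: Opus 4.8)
The plan is to establish the rational statement first and then promote it to $\bZ$ by a descending induction, using throughout the lambda-ring structure of Theorem~\ref{qf}. For the rational part, I would start from the classical fact (Radford's theorem, applied after the exponential isomorphism turning the quasi-shuffle into the shuffle product) that $\mb{QSym}_\bQ$ is a free polynomial algebra with $\{M_\al:\al\in\ly\}$ as a generating set, and then change generators. The map $\al\mapsto(\be,d)$, where $\al=d\cdot\be$ with $d$ the gcd of the parts of $\al$ and $\be\in\el$, is a bijection $\ly\to\el\times\bN$, and $M_{d\cdot\be}=\Psi^d(M_\be)=p_d(\be)$ by \eqref{pow}. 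For each $\be\in\el$ the Newton identities \eqref{lps}--\eqref{pl} give $\bQ[p_d(\be):d\in\bN]=\bQ[e_d(\be):d\in\bN]$, a triangular change of generators with unit diagonal; hence $\{e_n(\al):\al\in\el,n\in\bN\}$ is a polynomial basis of $\mb{QSym}_\bQ$. In particular the ring surjection $\pi\colon\bZ[x_{\al,n}:\al\in\el,n\in\bN]\to\mb{QSym}$, $x_{\al,n}\mapsto e_n(\al)$, becomes an isomorphism after $\ot_\bZ\bQ$, so it remains only to show that the $e_n(\al)$ span $\mb{QSym}$ over $\bZ$.

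To that end, for $\ga\in\mc$ with Chen--Fox--Lyndon factorization $\ga=\ga_1^{*r_1}*\cdots*\ga_k^{*r_k}$ ($\ga_1>_{\lex}\cdots>_{\lex}\ga_k$ Lyndon), write $\ga_i=d_i\cdot\be_i$ with $\be_i\in\el$ and set
\[\Theta(\ga):=\prod_{i=1}^{k}e_{r_i}(\ga_i)\ =\ \prod_{i=1}^{k}\la^{r_i}\!\bigl(\Psi^{d_i}(M_{\be_i})\bigr),\]
where $e_{r_i}(\ga_i)$ is meant in the sense of \eqref{el}. By \eqref{pl}, $\Psi^{d_i}(M_{\be_i})=p_{d_i}(\be_i)$ is an \emph{integral} polynomial in $e_1(\be_i),\dots,e_{d_i}(\be_i)$, and applying $\la^{r_i}$ to it keeps us, via the universal integral polynomials built into the lambda-ring axioms, inside $\bZ[e_n(\be_i):n\in\bN]$; hence $\Theta(\ga)\in\bZ[e_n(\al):\al\in\el,n\in\bN]$. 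The claim I would prove is that $\Theta(\ga)$ is homogeneous of weight $|\ga|$ and that its $\wll$-maximal term in the monomial basis $\{M_{\ga'}:\ga'\in\mc\}$ is $M_\ga$ itself, with coefficient $1$.

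Weight-homogeneity is immediate from \eqref{pow}. For the leading term, note that $\Psi^{d_i}(M_{\be_i})=M_{\ga_i}$ is a single monomial function of length $\ell(\ga_i)$, and that expanding $\la^{r_i}(M_{\ga_i})$ via \eqref{lps} in terms of the $\Psi^j(M_{\ga_i})=M_{j\cdot\ga_i}$ shows that only the main diagonal of that determinant reaches the maximal length $r_i\ell(\ga_i)$; so the top-length part of $e_{r_i}(\ga_i)$ is $\tfrac{1}{r_i!}$ times the ordinary shuffle (no part-merging) of $r_i$ copies of $\ga_i$. Multiplying the $k$ factors, the top-length part of $\Theta(\ga)$, of length $\ell(\ga)$, is $\bigl(\prod_i r_i!\bigr)^{-1}$ times the ordinary shuffle of the multiset of words $\ga_1^{\times r_1},\dots,\ga_k^{\times r_k}$; among the compositions so produced, the nonincreasing concatenation $\ga$ is the $\lex$-largest, so $M_\ga$ is the $\wll$-maximal term of $\Theta(\ga)$, and it remains to see that $\ga$ is produced with multiplicity exactly $\prod_i r_i!$, so that its coefficient is $1$. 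I expect this multiplicity count to be the main obstacle: it amounts to the fact that a Lyndon word $\al$ of length $l$ admits exactly $n!$ factorizations of $\al^{*n}$ as an ordered shuffle of $n$ copies of $\al$ (this uses the primitivity of Lyndon words), together with its generalization to the mixed multiset above. This is precisely the point at which a direct triangularity argument on the monomials in the $e_n(\al)$ breaks down — distinct such monomials may share a naive leading term, e.g.\ $e_1((1))^4$ and $e_2((1))^2$ both lead with $M_{(1^4)}$ — so the count should be isolated as a lemma of pure word combinatorics, in the spirit of Radford's theorem and the uniqueness in the CFL theorem.

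Granting the claim, I would finish by descending induction on $\leq_{\wll}$: since $\Theta(\ga)\in\mb{QSym}$ expands over $\bZ$ in the basis $\{M_{\ga'}\}$ with $M_\ga$ as its $\wll$-maximal term of coefficient $1$, we have $M_\ga=\Theta(\ga)-\sum_{\ga'<_{\wll}\ga}c_{\ga'}M_{\ga'}$ with $c_{\ga'}\in\bZ$; as $\Theta((1))=M_{(1)}=e_1((1))$ and $(1)$ is the $\wll$-least composition, every $M_\ga$ lies in $\bZ[e_n(\al):\al\in\el,n\in\bN]$. Thus $\pi$ is surjective, and being injective as well (by the rational step) it is an isomorphism, which is the assertion. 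As an alternative to the last two steps, one could follow Hazewinkel's original route: prove freeness over each localization $\bZ_{(p)}$, where Frobenius and Verschiebung are additionally at one's disposal, and then glue via $\bZ=\bigcap_p\bZ_{(p)}$ using the rational step.
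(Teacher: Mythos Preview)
The paper does not supply a proof of Theorem~\ref{fre}; it is quoted from \cite[Th.~6.7.5]{HNK}. Your proposal is essentially Hazewinkel's argument as given there (and in \cite{Haz1}): rational freeness first, then a $\bZ$-spanning argument by wll-triangularity built from the CFL factorization and the lambda-ring identities. The paper's proof of the parallel peak statement, Theorem~\ref{fr}(i), follows exactly this template, so you can read that proof as a blueprint for the one you are sketching.

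Two remarks on your ``main obstacle''. First, it is lighter than you fear if you do not try to handle the full multiset shuffle at once. Establish instead, for a single Lyndon $\ga_i$, that $e_{r_i}(\ga_i)=M_{\ga_i^{*r_i}}+(\text{wll-smaller})$; this is precisely \cite[Th.~6.5.8]{HNK}, the same lemma the paper invokes in the proof of Lemma~\ref{wl}. Then multiply: in $\prod_i M_{\ga_i^{*r_i}}$ the top-length part is the ordinary shuffle of the words $\ga_1^{*r_1},\dots,\ga_k^{*r_k}$, whose lex-maximal term is the concatenation $\ga$ with coefficient $1$, by uniqueness of the CFL factorization of $\ga$. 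So the $\prod_i r_i!$ is already absorbed in the first step and no separate mixed-multiset count is needed.

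Second, the paper's proof of Theorem~\ref{fr}(i) shows how to avoid even that product step: induct on $\leq_{\wll}$ and split into the three cases ($\be$ Lyndon; CFL with $k\ge2$; CFL with $k=1$ and exponent $\ge2$). In the $k\ge2$ case one simply uses $M_{\be'}M_{\be''}=M_{\be'\ast\be''}+(\text{wll-smaller})$ together with the induction hypothesis on $\be'$ and $\be''$, and only the $k=1$ case requires the single-Lyndon leading-term lemma. This is the organization in \cite{Haz1} and \cite[\S6.7]{HNK}; your $\Theta(\ga)$ packaging is equivalent but forces you to state the stronger shuffle fact. Either route is correct.
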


Via the lambda ring structure given in Theorem \ref{la}, PQSym can also serves as a $\La$-module defined as in \eqref{ac}. Here we just restrict it as a $\Ga$-module. For any $\al\in\mc_{\od}$, write
$p'_n(\al):=\Phi^n(L_\al)$ and
\beq\label{ql} q_n(\al):=q_n(L_{\al})=q_n(p'_1(\al),p'_2(\al),\dots),\,n\in\bN.\eeq
Note that $q_n(\al)$ can be obtained from the polynomial expression of $q_n$ in terms of the odd power sums by substituting $\Phi^n(L_\al)$ with these $p_n$. It is clear that $q_n(\al)=\tilde{\la}^n(L_\al)\in\mb{PQSym}$. In particular, $q_n((1))=q_n,\,n\in\bN$ by \eqref{qa}.

\begin{lem}\label{wl}
If $\al$ is a Lyndon odd composition, then
\[q_n(\al)=L_{\al^{*n}}+(\mb{wll-smaller than }\al^{*n}),\]
where (wll-smaller than $\al^{*n}$) stands
for a $\bZ$-linear combination of monomial peak functions whose indexes are wll-smaller than $\al^{*n}$.
\end{lem}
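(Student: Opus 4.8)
The plan is to compute $q_n(\al)$ by first understanding the Adams operations $\Phi^n$ on the monomial peak functions $L_\al$, and then tracing through the definition $q_n(\al) = q_n(p'_1(\al), p'_2(\al), \dots)$ where $p'_k(\al) = \Phi^k(L_\al)$. By Theorem \ref{la}, $\Phi^k(L_\al) = L_{k\cdot\al}$ when $k$ is odd and $0$ when $k$ is even. So $q_n(\al)$ is obtained from the polynomial expression of $q_n$ in the odd power sums $p_1, p_3, p_5, \dots$ by substituting $L_{k\cdot\al}$ for $p_k$ ($k$ odd). The leading term of this expression (with respect to the wll-ordering after expanding everything into the $L$-basis) should come from the $p_1^n$ term, which contributes $(\text{const})\cdot L_\al^n$ after the substitution $p_1 \mapsto L_{(1)}\cdot(\text{no})$—wait, more precisely $p_1 \mapsto L_\al$, so the $p_1^n/n!$-type term gives a multiple of $L_\al^n$. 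The first main step is therefore to expand $L_\al^n$ in the $L$-basis and extract its wll-leading term.

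For that I would use the quasi-shuffle structure transported through $\vt$. Recall $L_\al = \vt(M_\al)$ and $M_\al M_\be = \sum_\ga c_\ga M_\ga$ via the quasi-shuffle product $\al \bowtie \be$; applying $\vt$ gives $L_\al L_\be = \sum_\ga c_\ga L_{\vt(\ga)}$, though one must be careful because $\vt(M_\ga)$ is only $\pm L_{\La(\ga)}$ (or $0$ when $\ga \in \mc_{\bs{e}}$) by Theorem \ref{bas}. The key combinatorial fact is that among all terms $\ga$ appearing in the $n$-fold quasi-shuffle $\al \bowtie \cdots \bowtie \al$, the concatenation term $\al^{*n}$ is the unique one of maximal weight-and-length profile, and moreover it already lies in $\mc_{\od}$ (since $\al$ is odd and concatenation of odd compositions is odd), so $\vt(M_{\al^{*n}}) = (-1)^{\ell(\al^{*n})+\ell(\widehat{\al^{*n}})} L_{\al^{*n}}$ contributes $L_{\al^{*n}}$ with coefficient $\pm 1$ (absorbing the $1/n!$ against the $n!$ ways of interleaving, exactly as in the classical computation for QSym). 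All other quasi-shuffle terms either have strictly smaller length (coming from merges $a_i + b_j$) hence are wll-smaller, or the same shape but are lexicographically smaller, or fall into $\mc_{\bs{e}}$ and vanish under $\vt$. The terms from $q_n$ involving $p_3, p_5, \dots$ introduce $L_{k\cdot\al}$ for $k \geq 3$; since $k\cdot\al$ has the same length as $\al$ but strictly larger weight, products involving these have weight $n|\al|$ only if balanced by fewer factors, hence strictly smaller length than $\al^{*n}$, so they too land in the wll-smaller bucket. Thus $q_n(\al) = \pm L_{\al^{*n}} + (\text{wll-smaller})$.

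The sign must come out to $+1$; this requires checking that the coefficient of $p_1^n$ in $q_n$ expressed in power sums is $1/n!$ (true, since $q_n$ has leading term $e_n + h_n$ and both contribute $p_1^n/n!$, giving $2/n!$—so I should double-check whether the normalization in \eqref{ql} and the relation $L_{(1^n)} = q_n$, $L_{(n)} = 2p_n$ makes the constant work out; the factor $L_{(1)} = q_1 = 2p_1$ versus $L_{(k\cdot(1))} $ conventions need care). The main obstacle I anticipate is exactly this bookkeeping of signs and scalar factors: reconciling the $\pm 1$ from Theorem \ref{bas}, the numerical coefficients in the power-sum expansion of $q_n$, and the factor-of-$2$ discrepancies between $L_\al$, $M_\al$, $K_\al$, and $p_n$, all while confirming that no unexpected cancellation destroys the $L_{\al^{*n}}$ coefficient. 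A secondary obstacle is verifying rigorously that $\al^{*n}$ is genuinely wll-maximal among quasi-shuffle terms and that the CFL/Lyndon hypothesis on $\al$ is what guarantees the coefficient is exactly $1$ rather than a larger multiplicity (the Lyndon condition ensures $\al^{*n}$ is not obtainable by interleaving in "extra" ways that would inflate the count). I would handle the leading-term combinatorics by an induction on $n$ using the recursive definition of $\bowtie$, pushing all lower-order contributions into the wll-smaller remainder.
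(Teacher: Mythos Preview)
Your approach is essentially the same as the paper's: isolate the $L_\al^n$ contribution as dominant, expand it via the quasi-shuffle product pushed through $\vt$, and use the Lyndon hypothesis to pin down $\al^{*n}$ as the unique wll-top term with multiplicity $n!$. Two points where the paper is cleaner than your sketch:

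\textbf{(1) The leading coefficient.} You try to extract the coefficient of $p_1^n$ in the power-sum expansion of $q_n$ and already see trouble (your ``$2/n!$'' is wrong; the actual coefficient is $2^n/n!$, and this route forces you into the factor-of-two morass you anticipated). The paper avoids all of this by working with $q_n(\al)=\tilde\la^n(L_\al)$ and applying the universal determinantal identity \eqref{lps} for lambda operations in terms of Adams operations: expanding along the main diagonal gives $n!\,q_n(\al)=L_\al^n+(\text{monomials of length}\le n-1\text{ in the }p'_i(\al))$ directly, with coefficient exactly $1$ for $L_\al^n$. No symmetric-function expansion of $q_n$ is ever needed. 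Also, your sign worry for the top term is unfounded: since $\al\in\mc_{\od}$, the concatenation $\al^{*n}$ is already an odd composition, so $\vt(M_{\al^{*n}})=L_{\al^{*n}}$ on the nose (the formula $\vt(M_{\hat\be})=(-1)^{\ell(\be)+\ell(\hat\be)}L_\be$ you invoke is for the \emph{hatted} composition, not for $\be$ itself).

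\textbf{(2) Integrality of the remainder.} Your outline would a priori only yield $q_n(\al)=L_{\al^{*n}}+\sum_{\be<_{\wll}\al^{*n}}k_\be L_\be$ with $k_\be\in\bQ$, since the determinant (or your power-sum expansion) introduces denominators. The lemma asserts a $\bZ$-linear remainder. The paper closes this gap in one line: because $q_n(\al)=\tilde\la^n(L_\al)$ lies in $\mb{PQSym}$ over $\bZ$ (via the commutative diagram \eqref{com}, i.e.\ $q_n(\al)=\vt(e_n(\al))$ with $e_n(\al)\in\mb{QSym}$), and the $L_\be$ form a $\bZ$-basis, all the $k_\be$ are automatically integers. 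You should include this step.
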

\begin{proof}
By formula \eqref{mo}, we deduce that
\beq\label{pm} L_\al L_\be=\sum_{\ga\notin\mc_{\bs{e}}}c_\ga L_{\vt(\ga)},\,\al,\be\in\mc_{\od},\eeq
when $\al\bowtie\be=\sum_{\ga\in\mc}c_\ga \ga$.
It is clear that $\vt(\ga)\leq_{\wll}\ga$ for any $\ga\notin\mc_{\bs{e}}$.

On the other hand, from the determinant expression \eqref{lps}, one can see that
\[n!q_n(\al)=L_\al^n+(\mb{monomials of length }\leq(n-1)\mb{ in the }p'_i(\al)).\]
Furthermore, all terms above are of equal weight. Since
$\al$ is Lyndon, the `length first-lexicographic thereafter' largest term in its $n$-th quasi-shuffle power $\al^{\bowtie n}$ is the concatenation power $\al^{*n}$ with the coefficient $n!$ (see \cite[Theorem 6.5.8]{HNK}). Hence, by formula \eqref{pm}, we get that
\[q_n(\al)=L_{\al^{*n}}+\sum_{\be<_{\wll}\al^{*n}}k_\be L_\be,\,k_\be\in\bQ.\]
Since $q_n(\al)\in\mb{PQSym}$ by \eqref{com}, all the coefficients $k_\be$ are indeed integers.
\end{proof}

Now we are in the position to state our main result, a structure theorem of PQSym.
\begin{theorem}\label{fr}
(i) The ring PQSym has a polynomial generating set \[\{q_n(\al):\al\in\el_{\od},\,n\in\bN\}\]
subject to the following complete relations:
\beq\label{re}
\sum_{i=0}^n(-1)^iq_i(\al)q_{n-i}(\al)=0,\,\al\in\el_{\od},n\in\bN.\eeq
In particular,
\beq\label{zb}\left\{\prod_{\al\in\el_{\od}}q_{\la_{\al}}(\al):\mb{ with finitely many nonempty strict partitions }\la_\al\right\}\eeq
is a $\bZ$-basis of PQSym, where  $q_\la(\al):=q_{\la_1}(\al)q_{\la_2}(\al)\cdots$ for any partition $\la$.

\noindent
(ii) The ring $\mb{PQSym}_\bQ$ has a free commutative polynomial basis \beq\label{ba}\{q_n(\al):\al\in\el_{\od},\,n\in\bN_{\od}\}.\eeq
Among this basis are the symmetric functions $q_n\,(n\in\bN_{\od})$ as a polynomial basis of $\Ga_\bQ$.
\end{theorem}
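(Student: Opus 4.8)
The plan is to mirror Hazewinkel's strategy for QSym (Theorem \ref{fre}), transporting everything through the lambda-ring morphism $\vt$ and the wll-triangularity of Lemma \ref{wl}. First I would establish that $\{q_n(\al):\al\in\el_{\od},\,n\in\bN\}$ generates PQSym as a ring. By Theorem \ref{bas} it suffices to produce each $L_\al$, $\al\in\mc_{\od}$, from these generators. Take the CFL factorization $\al=\ga_1^{*r_1}*\cdots*\ga_k^{*r_k}$ into nonincreasing Lyndon odd words (the parts of $\al$ being odd forces each $\ga_i\in\ly_{\od}$), and write each $\ga_i=d_i\cdot\be_i$ with $\be_i\in\el_{\od}$ and $d_i$ odd. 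Using $\Phi^{d_i}$ (an Adams operation by Theorem \ref{la}, hence commuting with the $\tilde\la$-action of $\Ga$) one gets $q_n(\ga_i)=q_n(d_i\cdot\be_i)$ expressible through the $q_m(\be_i)$; and Lemma \ref{wl} gives $q_{r_i}(\ga_i)=L_{\ga_i^{*r_i}}+(\text{wll-smaller})$. Multiplying these out and invoking \eqref{pm}, the product $\prod_i q_{r_i}(\ga_i)$ equals $L_\al$ plus a $\bZ$-combination of $L_\be$ with $\be<_{\wll}\al$ (using that $\ga_1^{*r_1}*\cdots*\ga_k^{*r_k}$ is the wll-leading term of the relevant quasi-shuffle product, as in Theorem 6.5.8 of \cite{HNK}). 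A downward induction on the wll-order then writes every $L_\al$ as an integer polynomial in the $q_n(\al)$, proving generation.

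Next I would prove that the relations \eqref{re} are the complete set of relations, i.e. that PQSym is the quotient of the polynomial ring $\bZ[y_{\al,n}:\al\in\el_{\od},n\in\bN]$ by the ideal generated by the Euler-type relations $\sum_i(-1)^iy_{\al,i}y_{\al,n-i}$. That these relations hold is exactly \eqref{eul} applied to $f=L_\al$ after noting $\tilde\la^i(L_\al)=q_i(\al)$ is the $\Ga$-module ($\tilde\la$-) action. For completeness, I would use the same device as Hazewinkel: the Euler relations let one rewrite any $q_n(\al)$ with $n$ even as a polynomial in the $q_m(\al)$ with $m$ odd, so the quotient ring $R:=\bZ[y_{\al,n}]/(\text{Euler})$ is already spanned over $\bZ$ by the monomials $\prod_\al q_{\la_\al}(\al)$ with the $\la_\al$ strict partitions (since $q_a q_b + q_b q_a$ with $a=b$, etc., reduce products $q_a(\al)q_b(\al)$, $a\le b$, to strict-partition-indexed monomials via the $\Ga$-relations — this is the standard fact that $\{q_\la:\la \text{ strict}\}$ spans $\Ga$). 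So there is a surjection from this spanning set onto PQSym; it remains to check linear independence of the images in PQSym.

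For independence I would argue by a weight/wll-leading-term count. Fix a weight $n$. The set of monomials $\prod_{\al\in\el_{\od}}q_{\la_\al}(\al)$ of total weight $n$, by the generation argument above, has wll-leading term $L_\be$ where $\be$ is obtained by concatenating the words $\al^{*(\text{length of }\la_\al)}$ appropriately arranged — and this leading-term assignment is a bijection onto odd compositions of $n$, because the data "a multiset of Lyndon odd words, each $\ga=d\cdot\be$ with $\be$ elementary, together with a strict partition recording its multiplicity-pattern" is precisely the CFL data for an odd composition of $n$, matching $|\ly_{\od}(n)|$-type counts with $|\mc_{\od}(n)|=f_{n-1}=\dim\mb{PQSym}_n$. (Concretely: CFL gives each odd composition a unique nonincreasing factorization into Lyndon odd words, a Lyndon word $\ga$ of multiplicity $r$ contributes, after the $d\cdot\be$ splitting, to the generator $q_?(\be)$, and collecting equal $\be$'s across the factorization yields the strict partition $\la_\be$.) Thus the spanning set of size $\dim\mb{PQSym}_n$ is wll-unitriangular against the basis $\{L_\be:\be\in\mc_{\od}(n)\}$, hence a $\bZ$-basis. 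This proves (i).

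Part (ii) is then essentially formal. Over $\bQ$, the Euler relations \eqref{re} for fixed $\al$ define a copy of $\Ga_\bQ=\bQ[p_n:n\in\bN_{\od}]$, so each one-variable-family $\{q_n(\al):n\in\bN\}$ is, over $\bQ$, freely generated by $\{q_n(\al):n\in\bN_{\od}\}$ (equivalently by the odd power sums $p'_n(\al)$). Since by (i) the $q_n(\al)$ generate with no relations across different $\al$, base-changing to $\bQ$ gives $\mb{PQSym}_\bQ=\bQ[q_n(\al):\al\in\el_{\od},n\in\bN_{\od}]$ freely. The last sentence follows since the family indexed by $\al=(1)$ is exactly $\{q_n:n\in\bN_{\od}\}$, a polynomial basis of $\Ga_\bQ$ by the displayed identity $\Ga_\bQ=\bQ[q_n:n\in\bN_{\od}]$. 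The main obstacle I anticipate is the bookkeeping in the independence step — verifying that the wll-leading-term map from strict-partition-indexed monomials in the $q_n(\al)$ to odd compositions is genuinely a bijection, which amounts to carefully reconciling the CFL factorization with the $d\cdot\be$ splitting and the $\hat{(\cdot)}$ bijection of Lemma \ref{hat}; the lambda-ring input (Theorems \ref{la}, \ref{bas}) and Lemma \ref{wl} do all the structural work, but this combinatorial matching is where an error could hide.
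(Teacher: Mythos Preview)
Your generation argument and the verification of the Euler-type relations \eqref{re} are correct and match the paper's approach. The gap is in your independence step for part (i). You propose a wll-unitriangular bijection from strict-partition-indexed monomials $\prod_\al q_{\la_\al}(\al)$ to odd compositions via their leading $L$-term, but this map is not injective: for a single elementary Lyndon $\al$, Lemma \ref{wl} gives $q_n(\al)=L_{\al^{*n}}+(\text{wll-smaller})$, and multiplying out one finds that the wll-leading term of $q_\la(\al)=q_{\la_1}(\al)\cdots q_{\la_\ell}(\al)$ is $L_{\al^{*|\la|}}$, which records only $|\la|$ and not the partition $\la$ itself. Two distinct strict partitions of the same integer therefore collide. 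Correspondingly, your verbal bijection via CFL actually produces, for each elementary $\be$, the multiset of odd scaling factors $d$ with their multiplicities---that is, an \emph{odd} partition $\mu_\be$, not a strict one. The plethysm $q_r(\ga)=(q_r\circ p_d)(\be)$ is a genuine polynomial, not a monomial, in the $q_m(\be)$, so there is no well-defined single ``$q_?(\be)$'' to record. You flag exactly this matching as the anticipated obstacle, and indeed it does not go through as written.

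The paper avoids this by reversing the logical order: it proves (ii) first by a pure rank count. Setting $Y=\bZ[Y_n(\al):\al\in\el_{\od},\,n\in\bN_{\od}]$ and $\rho(Y_n(\al))=q_n(\al)$, the assignment $\be\mapsto Y_{g(\be)}(\be_{\bs{red}})$ is a bijection between odd Lyndon words of weight $n$ and the degree-$n$ generators of $Y$; by CFL the weight-$n$ piece of $Y$ then has rank $|\mc_{\od}(n)|=f_{n-1}=\operatorname{rank}\mb{PQSym}_n$. Since $\rho_\bQ$ is surjective (your generation argument, plus the observation that over $\bQ$ the even $q_m(\al)$ lie in the subring generated by the odd ones via \eqref{re}), it is an isomorphism, giving (ii). Only then does the paper return to (i): the Euler relations show the strict-partition monomials span $\mb{PQSym}$ over $\bZ$ (the $\Ga$-fact you cite), and the classical equinumerosity of strict and odd partitions of each integer forces this spanning set to have, in each weight, cardinality equal to the rank just computed---hence it is a $\bZ$-basis. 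No leading-term bijection between strict-partition monomials and odd compositions is ever claimed.
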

\begin{proof}
For (i) we first need to prove that every $\bZ$-basis
element $L_\be$ of PQSym can be written as a polynomial
in the $q_n(\al),\al\in\el_{\od},n\in\bN$. Let $A$ be the subring of PQSym generated by these $q_n(\al)$'s. To start with, let $\be=(b_1,\dots,b_r)$ be a Lyndon odd composition. Then taking $\al=\be_{\bs{red}}:=(g^{-1}b_1,\dots,g^{-1}b_r)$ with $g=g(\be):=\mb{gcd}\{b_1,\dots,b_r\}$ and  using \eqref{po}, we have $L_\be=p'_g(\al)$, which is an integral polynomial in the $q_n(\al)$  by the determinant expression \eqref{pl}. Hence, $L_\be\in A$ when $\be$ is Lyndon. We now proceed with induction for the wll-ordering. For each separate weight the induction starts, because compositions of length 1 (including the case of weight 1) are Lyndon. So let $\be$ be a composition of weight $\geq$ 2 and length $\geq$ 2. By the CFL factorization theorem,
\beq\label{fac}\begin{split}
&\be=\be_1^{*r_1}*\cdots*\be_k^{*r_k},\,\be_i\in\ly_{\od},\\
&\be_1>_{\lex}\be_2>_{\lex}\cdots>_{\lex}\be_k.
\end{split}\eeq

If $k\geq2$, take $\be'=\be_1^{*r_1}$ and $\be''$ as the corresponding tail so that $\be=\be'*\be''$. Then
\[L_{\be'}L_{\be''}=L_{\be'*\be''}+(\mb{wll-smaller than }\be)
=L_\be+(\mb{wll-smaller than }\be),\]
and with induction it follows that $\be\in A$.
There remains the case that $k=1$ in the CFL factorization \eqref{fac}. In this case take
$\al=(\be_1)_{\bs{red}},\,g=g(\be_1)$ and observe that by Lemma \ref{wl},
\[L_\be=q_n(p'_g(\al))+(\mb{wll-smaller than }\be).\]
On the other hand, by formula \eqref{co}
\[q_n(p'_g(\al))=(q_n\circ p_g)(\al),\]
where $q_n\circ p_g\in\Ga$ is some polynomial with integer coefficients in the $q_j$, and hence
$(q_n\circ p_g)(\al)$ is a polynomial with integer coefficients in the $q_j(\al)$. With induction this finishes the proof of generation.
Meanwhile, since $q_n(\al)=\tilde{\la}^n(L_\al)$, we know that the generators $q_n(\al)$ satisfy relations \eqref{re} by \eqref{eul}. It remains to be seen whether these relations are complete. We prove it by a counting argument, which also implies the statement in (ii) simultaneously.

Now consider the free commutative ring
\[Y:=\bZ[Y_n(\al):\al\in\el_{\od},n\in\bN_{\od}]\]
and the ring homomorphism
\[\rho:Y\rw\mb{PQSym},\,Y_n(\al)\mapsto q_n(\al).\]
According to relations \eqref{re}, all the $q_m(\al)$, with $m$ even, can be written as a linear combination of $q_n(\al)\,(n\in\bN_{\od})$ with rational coefficients.
Moreover, we have shown that $q_n(\al)\,(\al\in\el_{\od},n\in\bN)$ can generate the whole $\mb{PQSym}$, thus the morphism $\rho$ should be surjective when the base ring changes to $\bQ$. Also, giving $Y_n(\al)$ a weight $n\mb{wt}(\al)$, it is homogeneous. Note that there is bijection between
\[\{Y_i(\al):\al\in\el_{\od},i\in\bN_{\od},i\mb{wt}(\al)=n\}\mb{ and }\{\be\in\ly_{\od}:\mb{wt}(\be)=n\}.\]
Indeed, given $\be\in\ly_{\od}$, take $g=g(\be),\al=\be_{\bs{red}}$, then $\be\mapsto Y_g(\al)$ provides the bijection. Hence,
by the CFL factorization theorem, the rank of the weight $n$ component, denoted $Y(n)$, of $Y$ is equal to $|\mc_{\od}(n)|=f_{n-1}$, coinciding with the rank of the homogeneous $\mb{PQSym}_n$. It means that for any $n\in\bN$, the image of the restriction $\rho|_{Y(n)}$ is a proper abelian subgroup of $\mb{PQSym}_n$ of the same rank, thus the base-changed morphism $\rho_\bQ:Y_\bQ\rw\mb{PQSym}_\bQ$ is an isomorphism. Meanwhile, the generating set contains $q_n((1))=q_n,\,n\in\bN_{\od}$, so the proof for (ii) is completed.

On the other hand, it is well-known that the number of odd partitions of $n$ is equal to that of strict partitions of $n$, by the following identity of generating functions,
\[\sum_{\la\bs{ odd}}t^{|\la|}=\prod_{r\geq1}\dfrac{1}{1-t^{2r-1}}
=\prod_{r\geq1}\dfrac{1-t^{2r}}{1-t^r}=\prod_{r\geq1}(1+t^r)
=\sum_{\la\bs{ strict}}t^{|\la|}.\]
Hence, the cardinality
\[|\{q_\la(\al):\la\mb{ strict partition},|\la|\mb{wt}(\al)=n\}|=|\{q_\la(\al):\la\mb{ odd partition},|\la|\mb{wt}(\al)=n\}|\]
for any $\al\in\el_{\od}$. Also by relations \eqref{re}, it is clear that the elements in \eqref{zb} span PQSym, thus form a $\bZ$-basis of it, just as
\[\left\{\prod_{\al\in\el_{\od}}q_{\la_{\al}}(\al):\mb{with finitely many nonempty odd partitions }\la_\al\right\}\]
being a $\bQ$-basis of $\mb{PQSym}_\bQ$ by (ii). Finally, relations \eqref{re} are complete.
\end{proof}

\begin{cor}
For each $\al\in\el_{\od}$, the $q_n(\al)\,(n\in\bN)$ generate a lambda subring of PQSym, denoted by $\Ga_\al$. And $\Ga_\al$ is isomorphic to $\Ga$ by identifying $q_n(\al)$ with $q_n$. Hence,
\beq\label{ot}\mb{PQSym}\cong\bigotimes_{\al\in\el_{\od}}\Ga_\al,\eeq
as a tensor product of infinitely many copies of the $\la$-ring $\Ga$, one for each $\al\in\el_{\od}$. In particular, PQSym is free over $\Ga$
as $\Ga=\Ga_{(1)}$.
\end{cor}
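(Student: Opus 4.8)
The plan is to exhibit each $\Ga_\al$ as the isomorphic image of a morphism of $\la$-rings out of $\Ga$, and then to read the decomposition \eqref{ot} straight off the $\bZ$-basis \eqref{zb}. Fix $\al\in\el_{\od}$. Since $q_n(\al)=\tilde{\la}^n(L_\al)$ and, by the very definition \eqref{fa} of the action of $\La$ on a $\la$-ring, $e_n(f)=\la^n(f)$ for any element $f$, formula \eqref{co} gives $\tilde{\la}^i(q_n(\al))=e_i(e_n(L_\al))=(e_i\circ e_n)(L_\al)$; expanding the symmetric function $e_i\circ e_n$ as an integer polynomial in the $e_k$'s and using $e_k(L_\al)=q_k(\al)$ shows $\tilde{\la}^i(q_n(\al))\in\Ga_\al$, and then (with the addition and multiplication rules for $\tilde{\la}_t$) that $\Ga_\al=\bZ[q_n(\al):n\in\bN]$ is stable under every $\tilde{\la}^i$, i.e.\ is a $\la$-subring of PQSym; the case $\al=(1)$ recovers that $\Ga$ is a $\la$-subring, as $q_n((1))=q_n$. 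Moreover $q_n\mapsto q_n(\al)$ is a well-defined ring homomorphism $\phi_\al\colon\Ga\rw\Ga_\al$, since $\Ga$ is presented by the Euler relations \eqref{eu} and the $q_n(\al)=\tilde{\la}^n(L_\al)$ satisfy precisely these by \eqref{eul} (that is, by \eqref{re}); and $\phi_\al$ is a morphism of $\la$-rings because it is the factorization through $\te\colon\La\rw\Ga$ of the evaluation $f\mapsto f(L_\al)$, which is a morphism of $\la$-rings $\La\rw\mb{PQSym}$ by \eqref{co} and the identity recorded just after it.

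The one substantive point is that $\phi_\al$ is injective, and here Theorem \ref{fr}(ii) does the work: the family $\{q_m(\be):\be\in\el_{\od},\,m\in\bN_{\od}\}$ being a free polynomial basis of $\mb{PQSym}_\bQ$, its subfamily $\{q_m(\al):m\in\bN_{\od}\}$ is algebraically independent over $\bQ$, so since $\Ga_\bQ=\bQ[q_m:m\in\bN_{\od}]$ is a polynomial ring in the odd-index $q_m$, the map $\phi_\al\ot\bQ$ sends a set of polynomial generators to an algebraically independent set and is injective; as $\Ga$ embeds in $\Ga_\bQ$, $\phi_\al$ itself is injective. Thus $\phi_\al\colon\Ga\xrightarrow{\ \sim\ }\Ga_\al$ is an isomorphism of $\la$-rings identifying $q_n$ with $q_n(\al)$, and transporting the $\bZ$-basis $\{q_\la:\la\text{ strict partition}\}$ of $\Ga$ across it shows $\{q_\la(\al):\la\text{ strict partition}\}$ is a $\bZ$-basis of $\Ga_\al$.

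For the remaining assertions, multiplication in PQSym induces a ring homomorphism $m\colon\bigotimes_{\al\in\el_{\od}}\Ga_\al\rw\mb{PQSym}$, the infinite tensor product being the colimit over finite subsets of $\el_{\od}$. By the previous paragraph $\bigotimes_\al\Ga_\al$ has the finitely supported pure tensors $\bigotimes_\al q_{\la_\al}(\al)$ as a $\bZ$-basis, and $m$ carries this basis bijectively onto the $\bZ$-basis \eqref{zb} of PQSym, so $m$ is a ring isomorphism. Giving $\bigotimes_\al\Ga_\al$ the $\la$-structure whose Adams operations are $\bigotimes_\al\Phi^n|_{\Ga_\al}$ — legitimate since all the rings in sight are torsion-free — $m$ commutes with Adams operations, because each $\Phi^n$ is a ring endomorphism and each $\Ga_\al$ is a $\la$-subring; hence $m$ is an isomorphism of $\la$-rings, which is \eqref{ot}. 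Isolating the factor $\al=(1)$, where $\Ga_{(1)}=\bZ[q_n((1)):n\in\bN]=\Ga$, the isomorphism \eqref{ot} displays PQSym as $\Ga\ot_\bZ\bigl(\bigotimes_{\al\in\el_{\od},\,\al\neq(1)}\Ga_\al\bigr)$, hence as a free $\Ga$-module, the second tensor factor being a free $\bZ$-module. The main obstacle is the injectivity of $\phi_\al$, resting on the rank count in Theorem \ref{fr}(ii); everything else is formal $\la$-ring bookkeeping with the bases already in hand.
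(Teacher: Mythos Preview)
Your proof is correct. The paper states this corollary without proof, treating it as an immediate consequence of Theorem~\ref{fr}; your argument fills in exactly the details one would expect---that $\Ga_\al$ is $\tilde{\la}$-stable via the plethysm identity \eqref{co}, that $\phi_\al$ exists by the Euler relations \eqref{re} and is injective by the algebraic independence in Theorem~\ref{fr}(ii), and that multiplication matches the $\bZ$-basis \eqref{zb} with the pure-tensor basis of $\bigotimes_\al\Ga_\al$---so there is no divergence in approach to report.
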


For each $\al\in\el$, let $\La_\al$ be the lambda subring of QSym generated by $e_n(\al)\,(n\in\bN)$. By Theorem \ref{fre}, we know that
\beq\mb{QSym}\cong\bigotimes_{\al\in\el}\La_\al.\eeq
Since $e_n(\al)=\la^n(M_\al),\,q_n=\tilde{\la}^n(L_\al)$, we have
\[\vt(e_n(\al))=\begin{cases}
q_n(\al),&\al\in\el_{\od}\\
0,&\mb{otherwise}
\end{cases},\mb{ i.e. }\vt(\La_\al)=\begin{cases}
\Ga_\al,&\al\in\el_{\od}\\
0,&\mb{otherwise}
\end{cases},\]
for any $\al\in\el$, by \eqref{po} and \eqref{com}.

\begin{rem}
In \cite[Th. 4.1, Cor. 4.2]{Hsi}, Hsiao proved that the graded Hopf dual of $\mb{PQSym}_\bQ$ is a concatenation Hopf algebra over $\bQ$ by finding a free primitive generating set $\{\ve^*_n\}_{n\in\bN_{\od}}$, and $\mb{PQSym}_\bQ$ is free commutative with a generating set $\{\tau_\al\}_{\al\in\ly_{\od}}$ containing all odd power sums $p_n\,(n\in\bN_{\od})$. Here we directly find another polynomial basis \eqref{ba} of $\mb{PQSym}_\bQ$ containing all $q_n\,(n\in\bN_{\od})$ instead. Moreover, our polynomial basis in fact lies in PQSym, while the basis of Hsiao fails.

On the other hand, we have proved that PQSym is free over $\Ga$ in \cite[\S 4.2]{Li} via the terminology of Heisenberg doubles.
But a definite answer for such freeness can hardly be found in other references, thus we give an intrinsic proof here. Meanwhile,
it drives us to consider a more interesting problem, that is, to find a polynomial basis for PQSym over the integers, as the peak version of the Ditters conjecture. Unfortunately, there maybe even not exists any polynomial basis for its subring $\Ga$, by contrast with $\La$, which is the key to solve the problem by \eqref{ot}.
\end{rem}

\centerline{\bf Acknowledgments}
The author would like to thank the anonymous referee who reviewed \cite{Li} for valuable comments and the partial support of NSFC (Grant No. 11501214) for this work.

\bigskip
\bibliographystyle{amsalpha}

\begin{thebibliography}{9999}
\medskip

\bibitem{BR} A. Backer, B. Richter, \textit{Quasisymmetric functions from a topological point of view}, Math Scand, \textbf{103} (2008), 208--242.

\bibitem{BHT} N. Bergeron, F. Hivert, J.-Y. Thibon:
    \textit{The peak algebra and the Hecke-Clifford algebras at $q=0$}, J. Comb. Theory, Ser. A \textbf{107} (2004), 1--19.

\bibitem{BMSW} N. Bergeron, S. Mykytiuk, F. Sottile, S. van Willigenburg: \textit{Shifted quasisymmetric functions and the
Hopf algebra of peak functions}, Discrete Math. \textbf{246} (2002), 57--66.

\bibitem{BHW} L. Billera, S. Hsiao, S. van Willigenburg:
\textit{Peak quasisymmetric functions and Eulerian
enumeration}, Adv. Math. \textbf{176} (2003), 248--276.

\bibitem{Di} E. J. Ditters: \textit{Curves and formal (co)groups}, Invent. Math. \textbf{17} (1972), 1--20.

\bibitem{Eh} R. Ehrenborg: \textit{On posets and Hopf algebras}, Adv. Math. \textbf{119} (1996), 1--25.

\bibitem{GKL} I. Gelfand, D. Krob, A. Lascoux, B. Leclerc, V. Retakh, J.-Y. Thibon: \textit{Noncommutative
symmetric functions}, Adv. Math. \textbf{122} (1995), 218--348.

\bibitem{Ge} I. Gessel:
\textit{Multipartite {P}-partitions and inner products of skew {S}chur functions}, Contemp. Math. \textbf{34} (1984), 289--301.

\bibitem{GX} L. Guo, B. Xie, \textit{Structure theorems of mixable shuffle algebras}, Comm. Algebra, \textbf{41} (2013), 2629--2649.

\bibitem{Haz} M. Hazewinkel: \textit{The algebra of quasi-symmetric functions is free over the integers}, Adv. Math. \textbf{164} (2001), 283--300.

\bibitem{Haz2} M. Hazewinkel: \textit{``Witt vectors. Part I'', Handbook of algebra}. Vol. \textbf{6}, Amsterdam: Elsevier/North-Holland, 2009, pp. 319--472.

\bibitem{Haz1} M. Hazewinkel: \textit{Explicit polynomial generators for the ring of quasisymmetric functions over the integers}, Acta Appl. Math. \textbf{109} (2010), 39--44.

\bibitem{HNK} M. Hazewinkel, N. Gubareni, V. V. Kirichenko: \textit{Algebras, rings and modules}, in: Lie Al-gebras and Hopf Algebras, in: Math. Surveys Monogr., vol. \textbf{168}, American Mathematical Society, Providence, RI, 2010.

\bibitem{Hsi} S. Hsiao: \textit{Structure of the peak Hopf algebra of quasisymmetric funtions}, preprint, math.bard.edu/hsiao
    /research/papers/peakalg-jan03.pdf.

\bibitem{Kn} D. Knutson: \textit{$\la$-rings and the representation theory of the symmetric group}, Lecture Notes in Mathematics \textbf{308}, Berlin-New York: Springer-Verlag, 1973.

\bibitem{Li} Y. Li: \textit{Representation theory of 0-Hecke-Clifford algebras}, J. Algebra \textbf{453} (2016), 189--220.

\bibitem{Mac} I. G. Macdonald: \textit{Symmetric Functions and Hall Polynomials}, 2nd edn. With Contributions by
A. Zelevinsky. Oxford Univ. Press, New York (1995)

\bibitem{MR} C. Malvenuto, C. Reutenauer:
\textit{Duality between quasi-symmetric functions and the {S}olomon descent algebra}, J. Algebra \textbf{177} (1995), 967--982.

\bibitem{SY} A. Savage, O. Yacobi: \textit{Categorification and Heisenberg doubles arising from towers of algebras}, J. Combin. Theory Ser. A \textbf{129} (2015), 19--56.

\bibitem{Sch} M. Schocker:
\textit{The peak algebra of the symmetric group revisited}, Adv. Math. \textbf{192} (2005), 259--309.

\bibitem{Ste} J. Stembridge:
\textit{Enriched P-partitions},
Trans. Amer. Math. Soc. \textbf{349} (1997), 763--788.

\end{thebibliography}

\end{document}